\documentclass[11pt]{article}
\title{\LARGE On Perrin Cordial Labeling: A New Approach in Graph Labeling Theory}

\author{Sarbari Mitra, Soumya Bhoumik\\
Department of Mathematics\\
Fort Hays State University
}
\usepackage{amssymb}
\usepackage{amsmath}
\usepackage{algorithm}
\usepackage[noend]{algpseudocode}
\usepackage{amssymb}
\usepackage{array}
\usepackage{subcaption}
\usepackage{theorem}
\usepackage{graphicx}
\usepackage{cite}
\usepackage{colortbl}
\newtheorem{thrm}{Theorem}[section]

\newtheorem{lem}[thrm]{Lemma}
\newtheorem{cor}[thrm]{Corollary}
\newtheorem{prop}[thrm]{Proposition}

\usepackage{amsfonts}
\usepackage{graphicx}
\usepackage{graphics}
\usepackage{amsfonts}
\usepackage{multirow}
\usepackage{xmpmulti}

\usepackage{xmpmulti}
\usepackage[all,arc,curve,color,frame]{xy}

\theorembodyfont{\rm} \theoremstyle{definition}
\newtheorem{defin}[thrm]{Definition}

 \newcounter{case}

 \renewcommand{\thecase}{\arabic{case}}

\newcounter{subcase}

 \renewcommand{\thesubcase}{\alph{subcase}}

\usepackage{fullpage}

\def\mod{{\rm mod\ }}

\def\TS{{\rm TS}}

\newcounter{cases}
\newcounter{subcases}

\newenvironment{proof}{\noindent {\sc Proof}.}
                {\phantom{a} \hfill \framebox[2.2mm]{ } \bigskip}

\makeatletter
\def\BState{\State\hskip-\ALG@thistlm}
\makeatother

\providecommand{\keywords}[1]{\textbf{\textit{Keywords.}} #1}

\begin{document}

\pagestyle{plain}

\baselineskip = 1.2\normalbaselineskip

\maketitle

\begin{abstract}
In this paper, we introduce the concept of \emph{Perrin cordial labeling}, a novel vertex labeling scheme inspired by the Perrin number sequence and situated within the broader framework of graph labeling theory. The Perrin numbers are defined recursively by the relation \( P_n = P_{n-2} + P_{n-3} \), with initial values \( P_0 = 0 \), \( P_1 = 3 \), and \( P_2 = 0 \). A Perrin cordial labeling of a graph \( G = (V, E) \) is an injective function \( f : V(G) \rightarrow \{P_0, P_1, \dots, P_n\} \), where the induced edge labeling \( f^* : E(G) \rightarrow \{0,1\} \) is given by \( f^*(uv) = (f(u) + f(v)) \pmod 2 \). The labeling is said to be cordial if the number of edges labeled \( 0 \), denoted \( e_f(0) \), and the number labeled \( 1 \), denoted \( e_f(1) \), satisfy the condition \( |e_f(0) - e_f(1)| \leq 1 \). A graph that admits such a labeling is called a \emph{Perrin cordial graph}. This study investigates the existence of Perrin cordial labelings in various families of graphs by analyzing their structural properties and compatibility with the proposed labeling scheme. Our results aim to enrich the theory of graph labelings and highlight a new connection between number theory and graph structures.
\end{abstract}

\keywords{Perrin Cordial, Path, Cycle, Complete Graphs, Complete Bipartite Graphs, Star Graphs, Bistar Graphs, Wheel Graphs, Jellyfish graph.}

\section{Introduction}
The domain of graph labeling has always fascinated researchers over the past few decades. Within this realm, a graph earns the designation of being cordial if it can be labeled with 0s and 1s in such a way that the difference of labels at the endpoints of its edges results in a distribution where the disparity between edges, labeled with ones and zeros, is at most one. 


\begin{defin}
A function $f:V(G)\rightarrow \{0,1\}$ is said to be Cordial Labeling if the induced function $f^*:E(G)\rightarrow \{0,1\}$ defined by $$f^*(uv)=\vert f(u)-f(v)\vert $$
satisfies the conditions $\vert v_f(0)-v_f(1)\vert \le 1$, as well as $\vert e_f(0)-e_f(1)\vert \le 1$, where \\
$v_f(0):=$ number of vertices with label $0$, \\
$v_f(1):=$ number of vertices with label $1$, \\ 
$e_f(0):=$ number of edges with label $0$, \\
$e_f(1):=$ number of edges with label $1$. 
\end{defin}

The concept of cordial labeling was introduced by Cahit in 1987 as a relaxed alternative to graceful and harmonious labeling. Later, Rokad and Ghodasara proposed Fibonacci cordial labeling \cite{RokadG2016}, and identified several graph families that admit such labelings. This framework was subsequently extended to additional classes of graphs (see \cite{MitraB2020, MitraPB2025, Rokad2017}). Motivated by these developments, we previously investigated cordial labeling schemes based on the Tribonacci sequence \cite{MitraB2022}.

Building upon this line of research, the present paper introduces a novel labeling scheme called Perrin cordial labeling, a natural extension of Fibonacci cordial labeling that draws upon the structure of the Perrin number sequence. In this work, we examine whether several well-known graph classes, such as paths, complete graphs, complete bipartite graphs, star graphs, wheel graphs, bistar graphs, triangular snake graphs, friendship graphs, and jellyfish graphs, include Perrin cordial labelings. The study also explores the behavior of this labeling scheme for larger instances within these families.

Although cordial labeling is less restrictive than graceful labeling, it still offers rich avenues for mathematical exploration. The integration of Perrin numbers into the labeling criteria inspired the development of Perrin cordial labeling, which bridges elements of graph theory and number theory. We believe that this framework offers a promising direction for future research at the intersection of these two domains.

\begin{defin}
The sequence of (reindexed) Perrin numbers $P_n$ is defined by the recurrence relation (for $n\ge 2)$:
$$P_{n} = P_{n-2} + P_{n-3};\ P_0=0, P_1=3, P_2=0, P_3=2, P_4=3, P_5=2, P_6=5,\cdots $$
\end{defin}

\begin{defin}
An injective function $f: V(G)\rightarrow \{P_0, P_1,\cdots, P_n\}$ is said to be Perrin cordial labeling if the induced function $f^*:E(G)\rightarrow \{0,1\}$ defined by $$f^*(uv)=(f(u)+f(v)) \ (\mod 2)$$
satisfies the condition $\vert e_f(0)-e_f(1)\vert \le 1$.
\end{defin}

In this paper, we denote the total number of even edges by $\varepsilon_0$ (analogously $\varepsilon_1$ for the number of odd edges). For any induced subgraph $H$ of $G$, then the number of even edges is denoted by $\varepsilon^H_0$, analogously $\varepsilon_0^H$, and $\varepsilon^H_0-\varepsilon^H_1$ will be denoted as $\tilde{\varepsilon}_H$ (we will consider $\tilde{\varepsilon}_G$ by $\tilde{\varepsilon}$).

\section{Main Results}
We start by observing that $k$ is the total number of even Perrin numbers in $P_n$. Then 
$$k =
\begin{cases}
3p+1, & \text{for } n=7p, 7p+1 \\ 
3p+2, & \text{for } n=7p+2 \\ 
3p+3, & \text{for } n=7p+3, 7p+4 \\ 
3p+4, & \text{for } n=7p+5, 7p+6 
\end{cases}
$$
We now investigate several families of graphs, starting with path graphs $P_n$, which admit Perrin cordial labelings. 

\begin{thrm}
All Paths are Perrin cordial. 
\end{thrm}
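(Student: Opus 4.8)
The plan is to reduce the problem to a statement about binary strings and then realize it by laying suitably chosen Perrin numbers along the path. Write the path on $m$ vertices as $v_1v_2\cdots v_m$; it has $m-1$ edges. For any labeling $f$ set $\pi_i := f(v_i)\bmod 2$. Because $f^*(v_iv_{i+1}) = (f(v_i)+f(v_{i+1}))\bmod 2 = \pi_i\oplus\pi_{i+1}$, the number $\varepsilon_1$ of odd edges is exactly the number of indices $i$ with $\pi_i\neq\pi_{i+1}$ --- the number of ``parity switches'' of the word $\pi_1\pi_2\cdots\pi_m$ --- while $\varepsilon_0=(m-1)-\varepsilon_1$. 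So it is enough to choose $m$ distinct Perrin numbers and order them on the path so that their parity word has exactly $\lfloor(m-1)/2\rfloor$ or $\lceil(m-1)/2\rceil$ switches.

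Next I would secure the supply of labels. Since $P_n=P_{n-2}+P_{n-3}$, the parities of $P_0,P_1,\dots$ are eventually periodic with period $7$ (three even, four odd per period), which is consistent with the count $k$ displayed above; as $P_n\to\infty$ and $k\to\infty$, there are infinitely many pairwise-distinct even Perrin numbers and infinitely many pairwise-distinct odd ones. Hence, for $n$ large enough, the set $\{P_0,\dots,P_n\}$ contains $E:=\lfloor m/2\rfloor$ distinct even values and $O:=\lceil m/2\rceil$ distinct odd values; fix these $m$ numbers as the labels.

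It remains to arrange these $m$ symbols so that the parity word has the target number of switches. A binary word that uses all of $E$ symbols of one parity and all of $O\ge E$ of the other, written as alternating maximal constant blocks with the extra block (when $O>E$) given to the more numerous parity, can be made to have any number of blocks from $2$ up to $2E$, or up to $2E+1$ when $O>E$; since a word with $t$ blocks has $t-1$ switches, every switch-count in $\{1,2,\dots,m-1\}$ is attainable for our balanced split (indeed $2E-1=m-1$ when $m$ is even and $2E=m-1$ when $m$ is odd). In particular the target value, which lies between $1$ and $m-1$ for every $m\ge 3$, is attainable, and the resulting ordering is a Perrin cordial labeling; the cases $m=1,2$ are immediate.

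The one place that needs care --- and the only real obstacle --- is the distinctness bookkeeping for small $m$: the early Perrin numbers repeat ($0$, $2$, $3$, and $5$ each occur twice), so one must push the pool $\{P_0,\dots,P_n\}$ far enough out that the balanced split $E=\lfloor m/2\rfloor$, $O=\lceil m/2\rceil$ is genuinely available. Once $n$ is chosen past these coincidences, the block construction of the previous paragraph applies verbatim and completes the proof.
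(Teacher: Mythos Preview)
Your reduction of the problem to counting parity switches along the path is correct, and it is exactly the mechanism the paper's own proof exploits. The gap is in your second and fourth paragraphs: you treat the upper index $n$ of the label pool $\{P_0,\dots,P_n\}$ as a free parameter that may be ``pushed out'' until a balanced split $E=\lfloor m/2\rfloor$, $O=\lceil m/2\rceil$ becomes available. That is not the definition in force here. For a graph on $m$ vertices the codomain is the fixed list $\{P_0,\dots,P_m\}$ --- compare the proofs for $K_n$ and for $C_{2m}$, and note that the star-graph corollary would be vacuous if $n$ were unrestricted --- so exactly one Perrin index is omitted and the parity profile of the $m$ labels is \emph{forced} by the sequence: roughly $3m/7$ even and $4m/7$ odd, adjustable by $\pm 1$ through the choice of which index is skipped. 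Your proposed remedy for the repeated early values (enlarging the pool) reflects the same misreading; in the paper's convention the labels are indexed, so $P_0$ and $P_2$ count as distinct labels even though both equal $0$.

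Your block-counting argument does survive once the correct parity counts are substituted. With $a$ labels of the minority parity and $b\ge a$ of the majority, the attainable switch counts are all of $\{1,\dots,2a\}$ (or $\{1,\dots,2a-1\}$ when $a=b$); here $a\ge 3p$ when $m=7p+q$, while the required switch count $\lfloor(m-1)/2\rfloor$ is about $7p/2$, comfortably below $2a\approx 6p$. A short check over $q\in\{0,\dots,6\}$ and the choice of omitted index then finishes the argument. The paper carries out essentially the same idea but concretely: it lays down an explicit block pattern $(\text{odds})(\text{evens})(\text{alternating})(\text{odds})$, computes $\varepsilon$ as a function of the block lengths, and solves for those lengths in each residue class of $m$ modulo $7$ (and of $p$ modulo $4$). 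Once you fix the pool size, your abstract existence argument is a tidier route to the same conclusion; as written, however, it does not prove the theorem.
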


\begin{proof}



Since the parity of Perrin numbers (even or odd) is the only factor influencing the induced edge labels in Perrin cordial labeling, assigning specific Perrin numbers to vertices is unnecessary. Therefore, it suffices to consider the parity pattern rather than the individual values. In this paper, we establish that for any positive integer \( n \), the path graph \( P_n \) admits a Perrin cordial labeling. To begin, consider the case where \( n = 7p \) for some positive integer \( p \). In this setting, the labeling involves \( 3p + 1 \) even Perrin numbers, with the remaining \( 4p - 1 \) being odd.

We construct a labeling scheme as follows: assign the first \( q_1 \) vertices the odd Perrin numbers, followed by assigning \( p_1 \) vertices the even Perrin numbers. Then assign the next \( 2p_2 \) many vertices alternating odd and even Perrin numbers, starting with an odd one. Finally, assign the remaining \( q_2 \) vertices odd Perrin numbers. This structured assignment yields the imbalance,
\[
\varepsilon = (p_1 + q_1 + q_2 - 3) - (2p_2 + 2) = 7p - 4p_2 - 5,
\]
where we have used the relationships \( p_1 + p_2 = 3p \) and \( q_1 + q_2 + p_2 = 4p \), assuming without loss of generality that one even Perrin number is skipped. For the labeling to be Perrin cordial, we require \( |\varepsilon| = |7p - 4p_2 - 5| \leq 1 \), which holds when \( p \equiv 0, 2, 3 \pmod{4} \). 

On the other hand, if we choose \( q_1 = 0 \), then the imbalance becomes\(
 \varepsilon = 7p - 4p_2 - 3,
\)
which satisfies the Perrin cordial condition (i.e., \( |\varepsilon| \leq 1 \)) when \( p \equiv 1 \pmod{4} \). 

For all remaining cases on $n$ ($7p+q$ where $q\in\{1,2,\cdots,6\}$), Perrin cordiality can be verified by constructing and analyzing similar parity-based labeling.
\end{proof}

We now turn our attention to the cyclic graph $C_n$ and investigate its behavior under Perrin cordial labeling. To begin, we state the following lemma. The proof is omitted, as it follows directly from an argument analogous to that of Corollary 3.2 in \cite{MitraB2020}.

\begin{lem}\label{lem-cycle} 
For \textit{any} injective function $f:V(G)\rightarrow \{P_0,P_1,\cdots,P_{2m}\}$ on cyclic graph $C_{2m}$, if $ \varepsilon =2\ (\mod\ 4)$, then $C_{2m}$ can not be converted Perrin cordial.
\end{lem}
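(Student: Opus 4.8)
The plan is to reduce the statement to a parity-of-edge-count argument on the cycle $C_{2m}$, exactly as in the cited corollary. First I would recall that in any cordial-type labeling the induced edge labels depend only on the vertex parities, so the labeling of $C_{2m}$ is determined by a cyclic binary string $b_1 b_2 \cdots b_{2m}$ (with $b_i$ the parity of $f$ at the $i$-th vertex), and the edge between consecutive vertices is even iff the two adjacent bits agree. Thus $\varepsilon_1$, the number of odd edges, equals the number of \emph{switches} (positions where $b_i \neq b_{i+1}$, indices mod $2m$) in this cyclic string. The first key observation is that the number of switches around any cycle is always even: each maximal run of equal bits contributes exactly two boundary switches, or equivalently, traversing the cycle one returns to the starting parity, so the parity must flip an even number of times. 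Hence $\varepsilon_1 \equiv 0 \pmod 2$, and since $|E(C_{2m})| = 2m$ is even, $\varepsilon_0 = 2m - \varepsilon_1$ is also even.

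Now I would compute $\varepsilon = \varepsilon_0 - \varepsilon_1 = 2m - 2\varepsilon_1$ and examine its residue mod $4$. The hypothesis is $\varepsilon \equiv 2 \pmod 4$. For $C_{2m}$ to be Perrin cordial we would need $|\varepsilon_0 - \varepsilon_1| \le 1$; but since both $\varepsilon_0$ and $\varepsilon_1$ are even, their difference $\varepsilon$ is even, so $|\varepsilon| \le 1$ forces $\varepsilon = 0$, i.e. $\varepsilon \equiv 0 \pmod 4$. This directly contradicts $\varepsilon \equiv 2 \pmod 4$, so no injective $f$ with $\varepsilon \equiv 2 \pmod 4$ can yield a Perrin cordial labeling of $C_{2m}$.

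The substantive point — and the only place any care is needed — is establishing that $\varepsilon_1$ (equivalently the number of parity switches) is always even on an even cycle; everything after that is arithmetic mod $4$. I would phrase this via the run-decomposition of the cyclic binary word, or alternatively note that $\sum_{i} (b_i \oplus b_{i+1}) \equiv \sum_i b_i + \sum_i b_{i+1} = 2\sum_i b_i \equiv 0 \pmod 2$ when the sum is taken over the full cycle. Since the lemma is stated for \emph{any} injective $f$, no properties of Perrin numbers beyond their parities enter; the argument is purely combinatorial, which is why the authors can legitimately defer to the analogous proof in \cite{MitraB2020}. I would then close by remarking that the same reasoning shows the only achievable values of $\varepsilon$ on $C_{2m}$ lie in $4\Z$, so $\varepsilon \equiv 2 \pmod 4$ is genuinely an obstruction rather than a vacuous case.
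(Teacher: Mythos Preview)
Your argument is correct and is exactly the parity computation the paper defers to \cite{MitraB2020}: the number of parity switches around any cycle is even, so $\varepsilon_1$ is even, whence $\varepsilon = 2m - 2\varepsilon_1 \equiv 2m \pmod 4$ is an invariant of $C_{2m}$ independent of $f$; when this residue is $2$ the condition $|\varepsilon|\le 1$ forces $\varepsilon = 0$, a contradiction. One small slip in your closing sentence: the achievable values of $\varepsilon$ lie in $2m + 4\Z$, not in $4\Z$ --- when $m$ is odd this coset is $2 + 4\Z$ (so the hypothesis is always met and the obstruction is genuine), whereas for $m$ even the hypothesis is vacuous.
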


\begin{thrm}\label{cycle} 
The cyclic graph $C_{n}$ is Perrin cordial if and only if $n\not\equiv 2 \pmod 4$. 
\end{thrm}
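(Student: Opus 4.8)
The plan is to prove the statement in two directions. For the \emph{necessity} direction, suppose $n \equiv 2 \pmod 4$. Then $n$ is even, say $n = 2m$, and the number of edges in $C_n$ equals $n = 2m \equiv 2 \pmod 4$. Since every edge label is $0$ or $1$, we have $\varepsilon_0 + \varepsilon_1 = n \equiv 2 \pmod 4$, so $\varepsilon_0 - \varepsilon_1 = \varepsilon$ is even; moreover, because the cycle is a closed walk, going around the cycle the parities of the vertex labels must return to the start, which forces the number of odd-labeled edges $\varepsilon_1$ to be even (each odd edge marks a parity switch, and an even number of switches is needed to return home). Hence $\varepsilon_0 = n - \varepsilon_1 \equiv 2 \pmod 4$ as well, so $\varepsilon = \varepsilon_0 - \varepsilon_1 \equiv 2 \pmod 4$. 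By Lemma~\ref{lem-cycle}, $C_n$ cannot be Perrin cordial. (A subtlety to check: Lemma~\ref{lem-cycle} is stated for $C_{2m}$ with the codomain $\{P_0,\dots,P_{2m}\}$; one should note the argument only uses the parity pattern of the labels and the fact that $\varepsilon \equiv 2 \pmod 4$, so it applies verbatim whenever $n \equiv 2 \pmod 4$ regardless of how many Perrin numbers are available.)

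For the \emph{sufficiency} direction, I would handle the remaining residues $n \equiv 0, 1, 3 \pmod 4$ by explicit parity-pattern constructions, exactly in the spirit of the proof that all paths are Perrin cordial. The key point, reiterated, is that only the parities of the assigned Perrin numbers matter, and by the formula for $k$ (the count of even Perrin numbers in $\{P_0,\dots,P_n\}$) we have enough evens and odds to realize a wide range of patterns. The cleanest approach is to first produce a path-type labeling of the $n$ vertices arranged in a line achieving $|\tilde\varepsilon| \le 1$ for the $n-1$ path edges, and then control the single extra ``closing'' edge $v_n v_1$: by choosing the pattern so that $v_1$ and $v_n$ receive labels of appropriate parity, one can add either a $0$-edge or a $1$-edge as needed to keep $|\varepsilon| \le 1$ for all $n$ edges.

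Concretely, I would split into cases on $n \bmod 4$ and, within each, further subdivide according to $n \bmod 7$ (since the supply of even Perrin numbers depends on $n \bmod 7$ via the stated piecewise formula). In each case, take a block pattern of the form: some odd labels, then some even labels, then an alternating odd/even stretch, then some odd labels again — the same skeleton used for paths — and compute $\varepsilon$ as an affine function of the block sizes subject to the constraints that the total number of evens used is $k$ (or $k-1$ if one even must be skipped) and the total is $n$. One then checks that the resulting linear Diophantine system admits a solution with $|\varepsilon| \le 1$, adjusting by $\pm 1$ via the closing edge. Since the count of evens $k \approx 3n/7$ is comfortably bounded away from $0$ and $n$, there is ample slack, so a solution always exists.

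The main obstacle I anticipate is the bookkeeping in the sufficiency direction: one must juggle two moduli simultaneously ($n \bmod 4$ controlling the target parity of $\varepsilon$ and the behavior of the closing edge, and $n \bmod 7$ controlling how many even Perrin numbers are available), which produces on the order of a dozen subcases, each needing a small explicit construction and a short verification that $|\varepsilon| \le 1$. There is also a minor edge-case nuisance for very small $n$ (e.g. $n = 3, 4, 5$), where the asymptotic ``ample slack'' argument breaks down and the constructions must be exhibited by hand. None of this is conceptually deep, but it requires care; the real content of the theorem is the clean necessity argument via Lemma~\ref{lem-cycle}, and the sufficiency is a routine — if tedious — extension of the path construction already given.
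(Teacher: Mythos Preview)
Your necessity argument is essentially the paper's, with the added virtue that you actually explain \emph{why} $\varepsilon \equiv 2 \pmod 4$ (the number of parity switches around a closed walk is even, so $\varepsilon_1$ is even and $\varepsilon = n - 2\varepsilon_1 \equiv 2 \pmod 4$), whereas the paper simply asserts this and invokes Lemma~\ref{lem-cycle}.

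For sufficiency, however, you are working much harder than necessary. The paper bypasses the path construction, the closing-edge analysis, and the $n \bmod 7$ casework entirely with one direct scheme on the cycle: label $v_1,\ldots,v_{p_1}$ with even Perrin numbers, then alternate odd/even on $v_{p_1+1},\ldots,v_{p_1+2p_2}$ (starting odd), then label all remaining vertices odd. A short edge count gives $\varepsilon_1 = 2p_2 + 2$ and hence $\varepsilon = n - 4p_2 - 4$, so one just takes $p_2 = (n-k)/4$ with $k \in \{3,4,5\}$ chosen to match $n \bmod 4$; this covers all three admissible residues in one stroke. The $n \bmod 7$ bookkeeping you anticipate never actually appears, because $p_1$ is a free parameter that absorbs whatever surplus of even labels $\{P_0,\ldots,P_n\}$ happens to provide. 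Your route would work and is arguably more scrupulous about verifying label availability, but it multiplies the casework by an order of magnitude where the paper needs essentially none.
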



\begin{proof}
First, observe that the cycle graph \( C_n \) is not Perrin cordial when \( n \equiv 2 \pmod{4} \). In particular, for any labeling of the vertices of \( C_{4m+2} \) with Perrin numbers, the resulting edge labeling yields an imbalance \( \varepsilon \equiv 2 \pmod{4} \). By Lemma~\ref{lem-cycle}, this violates the necessary condition for Perrin cordiality, and hence such a labeling cannot be transformed into a Perrin cordial labeling.

For the case where \( n \not\equiv 2 \pmod{4} \), we construct an explicit labeling of the vertices of the cyclic graph \( C_n \). Let \( V(C_n) = \{v_1, v_2, \ldots, v_n\} \) denote the vertices of the cycle. We assign {even Perrin numbers} to two distinct sets of vertices: the first \( p_1 \) vertices: \( \{v_1, v_2, \ldots, v_{p_1}\} \), and every second vertex after \( v_{p_1+1} \): \( \{v_{p_1+2}, v_{p_1+4}, \ldots, v_{p_1+2p_2}\} \). The remaining \( n - p_1 - p_2 \) vertices are assigned {odd Perrin numbers}. This labeling requires a total of \( p_1 + p_2 \) even Perrin numbers and \( n - (p_1 + p_2) + 1 \) odd Perrin numbers. 

Under this assignment, we obtain:
\[
\varepsilon_0 = n - 2p_2 - 2 \quad \text{and} \quad \varepsilon_1 = 2p_2 + 2,
\]
resulting in an imbalance of \( \varepsilon = n - 4p_2 - 4 \). To ensure \( \varepsilon \) is at most \( 1 \), we select \( p_2 = \frac{n - k}{4} \) for some \( k \in \{3, 4, 5\} \). This completes the proof.
\end{proof}

\begin{thrm}
The Complete graph $K_n$ is Perrin cordial iff $n=1,2,3,4,6,36,49,62,64,66,79,81,\\83$.
\end{thrm}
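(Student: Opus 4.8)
The plan is to reduce the problem to a purely arithmetic statement about parities of Perrin numbers, exactly as the introduction to this section suggests. Since $K_n$ has $\binom{n}{2}$ edges, and an edge is even precisely when its two endpoints receive Perrin numbers of the same parity, the whole question is: given the list of parities of $P_0,\dots,P_m$ (for whatever $m$ we are allowed to use), can we choose $n$ of those indices — say $a$ of them with even value and $b=n-a$ with odd value — so that the number of same-parity pairs $\binom{a}{2}+\binom{b}{2}$ and the number of mixed pairs $ab$ differ by at most $1$? So I would first write $\varepsilon = \binom a2+\binom b2-ab = \tfrac12\big((a-b)^2-(a+b)\big) = \tfrac12\big((a-b)^2-n\big)$, so Perrin cordiality of $K_n$ is \emph{equivalent} to the existence of an integer $d=a-b$ with $a+b=n$, $|d|\le$ (number of available even Perrin numbers in the appropriate direction), such that $|(d^2-n)|\le 2$, i.e. $n\in\{d^2-2,d^2-1,d^2,d^2+1,d^2+2\}$ and additionally $d\equiv n\pmod 2$ (so $a,b$ are integers) and $a=(n+d)/2$ does not exceed the supply of even Perrin numbers available when we are allowed labels $P_0,\dots,P_m$ with $m\ge n-1$.

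The second step is to make the supply constraint precise using the count $k$ of even Perrin numbers among $P_0,\dots,P_n$ stated at the start of the section: among the first $n$ labels $P_0,\dots,P_{n-1}$ the number of even ones is roughly $\tfrac37 n$, so $a$ can range from $0$ up to about $\tfrac37 n$ (and symmetrically $b$ up to about $\tfrac47 n$, but we may also pull in a few extra even labels $P_n, P_{n+1},\dots$ if the injection is into $\{P_0,\dots,P_m\}$ for larger $m$ — here I would need to read the definition carefully, but the cleanest reading forces $m=n-1$ or at any rate $n$ labels, so the even-count bound is essentially $k$). Combining with step one: $K_n$ is Perrin cordial iff there is $d$ with $n\equiv d\pmod 2$, $d^2$ within $2$ of $n$, and $(n+d)/2 \le$ (even supply) while $(n-d)/2 \le$ (odd supply). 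For small $n$ one checks $n=1,2,3,4,6$ by hand (these have small perfect squares nearby: $1=1^2$, $2=1^2+1$, $3=1^2+2$, $4=2^2$, $6=2^2+2$; note $5$ fails because $5$ is not within $2$ of $4$ or $9$ with the right parity and supply). For larger $n$ the condition $|d^2-n|\le 2$ forces $n$ itself to be extremely close to a perfect square, so the candidates are $n=t^2-2,t^2-1,t^2,t^2+1,t^2+2$; among these one then checks the parity condition $n\equiv d\equiv t\pmod 2$ and, crucially, whether the required even-count $a=(n+d)/2$ is $\le k$. This is where the sporadic list $36,49,62,64,66,79,81,83$ comes from: $36=6^2$, $49=7^2$, $64=8^2$, $81=9^2$ are the squares themselves, and $62=64-2$, $66=64+2$, $79=81-2$, $83=81+2$ are the $\pm2$ neighbours of $64$ and $81$ that happen to survive the supply constraint (while e.g. $47=49-2$, $51=49+2$, $34,38$, etc. do not, presumably because there the parity or the even-count bound fails).

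So the argument has two halves: (i) the exact algebraic characterization $\varepsilon=\tfrac12((a-b)^2-n)$, which immediately kills all $n$ not within $2$ of a perfect square of the correct parity, and (ii) a finite, explicit check — really a case analysis on $n$ modulo $7$ fed into the formula for $k$ — of which of the surviving near-square values $n$ actually admit an admissible split $(a,b)$. The main obstacle is (ii): one must verify both directions (that the listed values work \emph{and} that every other near-square value fails), and this requires carrying the exact value of $k$ as a function of $n$ through the inequality $(n+d)/2\le k$ for each residue class and each choice of $d\in\{t,t\pm\text{small}\}$ — a somewhat delicate but entirely bounded bookkeeping exercise, since beyond $n\approx 90$ the gap between consecutive squares outruns the slack of $\pm 2$ and the even supply $k\sim\tfrac37 n$ can never reach the required $a\approx n/2$ once $d$ is forced to be of order $\sqrt n$. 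I would organize this as: prove the square-gap reduction; tabulate $k(n)$ and the admissible $d$ for $n$ up to the point where $k < (n-\lceil\sqrt n\rceil)/2$ becomes permanent; read off the finite list; and separately dispatch the small cases $n\le 6$ by direct construction.
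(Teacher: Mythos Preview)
Your identity $\varepsilon=\tfrac12\big((a-b)^2-n\big)$ is correct and is in fact a cleaner packaging than the paper's direct expansion of $\binom{a}{2}+\binom{b}{2}-ab$; it also shows at once that, since $d=a-b\equiv n\pmod 2$ forces $d^2-n$ to be even, the only live cases are $n\in\{d^2-2,d^2,d^2+2\}$ (your $d^2\pm1$ never occur).

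There is, however, a genuine gap in how you treat the supply constraint. You write that ``$a$ can range from $0$ up to about $\tfrac37 n$,'' i.e.\ you impose only $a\le k$. But the definition in this paper fixes the codomain to $\{P_0,\dots,P_n\}$, which has $n+1$ elements, and $K_n$ has $n$ vertices, so \emph{exactly one} label is omitted. Consequently $a\in\{k-1,k\}$ --- two values, not an interval. (Equivalently: the odd-supply bound $b\le (n{+}1)-k$, which you do mention in passing, combined with $a+b=n$, forces $a\ge k-1$.) The paper's proof begins from precisely this observation (``drop either an odd or an even Perrin number''), substitutes the resulting $a$ for each residue $n\bmod 7$, and solves the resulting quadratic inequality in $p$; for instance, $n=7p$ with an even label dropped gives $a=3p$, $b=4p$, and $|\varepsilon|\le 1$ becomes $|p^2-7p|\le 2$, whose only solution is $p=7$, i.e.\ $n=49$.

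Once you incorporate this, your approach and the paper's coincide, and yours is arguably more illuminating: $d$ is forced to be $2k-n$ or $2k-n-2$, both of size roughly $n/7$, so $d^2\approx n$ pins $n$ near $49$, with the remaining sporadic values $36,62,64,66,79,81,83$ arising from the exact residue-dependent value of $k$. What does \emph{not} work is your stated plan ``first list all near-squares $t^2,t^2\pm2$, then filter by supply'': with $a$ pinned down, the natural direction is the reverse (for each $n$, compute the two admissible $d$'s and test $|d^2-n|\le 2$), and that is why your explanation of which near-squares survive --- why $36$ but not $25,34,38$; why $49$ but not $47,51$ --- remained heuristic.
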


\begin{proof}
First note that the vertex labeling can be chosen from $\{P_0,P_1,\cdots,P_n\}$, which imply we drop either an odd or an even Perrin number from from list. First we assime that we use all the Perrin numbers, except an even one. Let us also consider $n=7p$. As there are $3p$ many even and $4p$ many odd vertex labels, $e_f(1)=12p^2$, and $e_f(0)={3p \choose 2} +{4p \choose 2}$. Hence in order to Perrin cordial we must have $$\bigg\vert {3p \choose 2} +{4p \choose 2} - 12p^2\bigg\vert \le 1$$ It simplifies to $\vert p^2-7p\vert\le 2$, whose only non-trivial solution is $p=7$, i.e. $n=49$.

Now, if we consider all Perrin numbers, except an odd one, a similar argument leads us to the conclusion that $K_n$ is not Perrin cordial for any $n$. Considering other cases on $n$ ($7p+q$ where $q\in\{1,2,\cdots,6\}$), we get the complete list as stated above.  
\end{proof}

Next, we look into the Perrin cordiality of the family of Complete Bipartite graphs $K_{m,n}$. Without loss of generality, we can assume that $n\le m$. Let us denote the vertices of $K_{m,n}$ by $\{v_{11},v_{12},\cdots, v_{1m}\}\cup \{v_{21},v_{22},\cdots, v_{2n}\}$. In order to make $K_{m,n}$ Perrin cordial, let us label $p_i$ many vertices of $\{v_{ij}\}$ by even Perrin numbers and the rest by odd. As $e(0) = p_1p_2+(m-p_1)(n-p_2)$, and $\varepsilon_1 = p_1(n-p_2)+ p_1(n-p_2)$, on simplification, we get

\begin{equation}\label{1}
\varepsilon_0- \varepsilon_1= (m-2p_1)(n-2p_2)
\end{equation}
Thus, if $n$ is even, then by assigning half of $n$ vertices even and the rest odd Perrin labels, we assure that $\varepsilon_0- \varepsilon_1=0$, which makes $K_{m,n}$ Perrin cordial. Now, we are left to consider the case when $n$ is odd.

\begin{thrm}
Let \( K_{m,n} \) be the complete bipartite graph, where \( n \) is odd and \( m \) is even. Then \( K_{m,n} \) admits a Perrin cordial labeling if and only if \( m \leq 6n + 26 \), with the exception that \( m \neq 6n + 22 \).
\end{thrm}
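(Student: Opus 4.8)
The plan is to use equation~\eqref{1} to reduce the Perrin cordiality of $K_{m,n}$ (with $n$ odd and $m$ even) to a single inequality counting even Perrin numbers, and then to settle that inequality through a case analysis modulo $7$. For the reduction, note first that $n$ odd makes $n-2p_2$ odd, hence nonzero, while $m$ even makes $m-2p_1$ even; consequently $\tilde\varepsilon=(m-2p_1)(n-2p_2)$ is always even, so the requirement $|\tilde\varepsilon|\le 1$ forces $\tilde\varepsilon=0$, and since $n-2p_2\ne 0$ this in turn forces $p_1=m/2$ (an integer, as $m$ is even). Conversely, any assignment with $p_1=m/2$ is Perrin cordial. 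Thus the $m$-part must carry exactly $m/2$ even and $m/2$ odd labels, the $n$-part carries $p_2$ even and $n-p_2$ odd labels with $p_2\in\{0,1,\dots,n\}$ still free, and --- since $K_{m,n}$ has $m+n$ vertices --- exactly one number of $\{P_0,\dots,P_{m+n}\}$ is left unused, with its parity our choice. (As in the path case, only the parities of the assigned Perrin numbers are relevant.)

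Let $E=E(m+n)$ denote the number of even Perrin numbers among $P_0,\dots,P_{m+n}$, obtained from the piecewise formula for $k$ at the start of Section~2 with index $m+n$. Dropping an even (resp.\ odd) Perrin number leaves $\mathcal E=E-1$ (resp.\ $\mathcal E=E$) even labels, all of which are used; with $p_1=m/2$ the number used is $m/2+p_2$. Hence $K_{m,n}$ is Perrin cordial if and only if $m/2\le\mathcal E\le m/2+n$ for some $\mathcal E\in\{E-1,E\}$, i.e.
\[
\frac m2\ \le\ E\ \le\ \frac m2+n+1 .
\]
Writing $m+n=7p'+r$ with $r\in\{0,\dots,6\}$, we have $E=3p'+c_r$ with $(c_0,\dots,c_6)=(1,1,2,3,3,4,4)$, that is $E=\tfrac{3(m+n-r)}{7}+c_r$. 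Clearing denominators, the lower bound $E\ge m/2$ is equivalent to $m\le 6n+g(r)$, where $g(r)=14c_r-6r$, so $(g(0),\dots,g(6))=(14,8,16,24,18,26,20)$; the upper bound $E\le m/2+n+1$ reduces to $m\ge g(r)-8n-14$, which holds for every admissible pair (its right-hand side is at most $4$, and whenever it is positive the residue $r$ already forces $m$ to be at least that large). Therefore Perrin cordiality is equivalent to the single condition $m\le 6n+g\big((m+n)\bmod 7\big)$.

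Finally one reads off the answer. Since $g(r)\le 26$ for all $r$, no value $m>6n+26$ is admissible. For $m=6n+c$ with $c$ even one has $(m+n)\bmod 7=c\bmod 7$, so the condition becomes $c\le g(c\bmod 7)$: this holds for $c=26$ ($g(5)=26$), $c=24$ ($g(3)=24$), $c=20$ ($g(6)=20$), $c=18$ ($g(4)=18$), $c=16$ ($g(2)=16$), $c=14$ ($g(0)=14$), $c=12$ ($g(5)=26$), $c=10$ ($g(3)=24$), and for every $c\le 8$ since $g\ge 8$; it fails exactly for $c=22$, where $g(1)=8<22$. Hence $K_{m,n}$ is Perrin cordial precisely when $m\le 6n+26$ and $m\ne 6n+22$, as claimed.

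The main obstacle is the arithmetic in the last two paragraphs: extracting the correct step function $g$ from the piecewise even-count formula and then verifying, class by class modulo $7$, that the admissible set is exactly $\{m\le 6n+26\}$ with the single hole at $m=6n+22$ and no others. Checking that the upper bound $E\le m/2+n+1$ never becomes the binding constraint --- in particular handling the small-$n$ corner cases --- is the other point requiring some care, though it is routine.
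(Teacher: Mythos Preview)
Your argument is correct and follows the same route as the paper: use equation~\eqref{1} to force $p_1=m/2$, then reduce Perrin cordiality to the availability of enough even Perrin numbers among $P_0,\dots,P_{m+n}$. You actually go further than the paper, which works out only the single exclusion $m=6n+22$ explicitly and asserts the remaining cases; your derivation of the threshold function $g(r)=14c_r-6r$ and the class-by-class check that $c=22$ is the unique even failure in $\{0,2,\dots,26\}$ supplies exactly the analysis the paper leaves implicit.
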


\begin{proof}
Since \( m \) is even, a Perrin cordial labeling requires that at least \( m/2 \) even Perrin numbers be assigned to half of the \( m \) vertices in the corresponding partite set (see Equation~\eqref{1}). 

Suppose \( m = 6n + 22 \). Then the total number of labels needed is \( m + n = 7n + 22 \), which can be expressed as \( 7(n + 3) + 1 \). Observing the pattern in the Perrin sequence, this implies that only \( 3(n + 3) + 1 = 3n + 10 \) even Perrin numbers are available among the first \( m + n \) terms. However, the requirement for a Perrin cordial labeling is at least \( m/2 = 3n + 11 \) even labels—one more than what is available. Therefore, a Perrin cordial labeling is not possible in this case.

For all other values of \( m \leq 6n + 26 \), with \( m \neq 6n + 22 \), an analysis of the distribution of even Perrin numbers within the first \( m + n \) terms of the sequence shows that the required number of even labels is available. Hence, the stated condition on \( m \) is both necessary and sufficient for the existence of a Perrin cordial labeling.
\end{proof}


\begin{thrm}
Let \( m \) and \( n \) be odd positive integers. The complete bipartite graph $K_{m,n}$ is Perrin cordial if one of the following conditions holds:
$$m+n \le 
\begin{cases}
28, & \text{if } m+n \equiv 0 \  (\mod 7) \\ 
22, & \text{if } m+n \equiv 1 \  (\mod 7) \\
30, & \text{if } m+n \equiv 2 \  (\mod 7) \\
38, & \text{if } m+n \equiv 3 \  (\mod 7) \\
32, & \text{if } m+n \equiv 4 \  (\mod 7) \\
40, & \text{if } m+n \equiv 5 \  (\mod 7) \\
34, & \text{if } m+n \equiv 6 \  (\mod 7) 
\end{cases}
$$
\end{thrm}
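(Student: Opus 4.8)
The plan is to turn Perrin cordiality of $K_{m,n}$, for $m$ and $n$ both odd, into a single arithmetic condition on $s:=m+n$ governed by the number of even Perrin numbers available, using Equation~\eqref{1} as the bridge. Suppose a labeling assigns even Perrin numbers to $p_1$ vertices of the first partite set and to $p_2$ vertices of the second. Since $m$ and $n$ are odd, both $m-2p_1$ and $n-2p_2$ are odd and hence nonzero, so by Equation~\eqref{1} the imbalance $\tilde{\varepsilon}=(m-2p_1)(n-2p_2)$ is odd; thus $|\tilde{\varepsilon}|\ge 1$, and cordiality forces $|\tilde{\varepsilon}|=1$. This means $|m-2p_1|=|n-2p_2|=1$, i.e. $p_1\in\{\tfrac{m-1}{2},\tfrac{m+1}{2}\}$ and $p_2\in\{\tfrac{n-1}{2},\tfrac{n+1}{2}\}$, so the total number $a:=p_1+p_2$ of even labels used must lie in $\{\tfrac{s}{2}-1,\tfrac{s}{2},\tfrac{s}{2}+1\}$. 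Conversely each of these three values of $a$ is realized by an admissible pair $(p_1,p_2)$ (all candidate pairs satisfy $0\le p_1\le m$, $0\le p_2\le n$ since $m,n\ge 1$), and each yields $|\tilde{\varepsilon}|=1$.

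Next I would bring in the Perrin sequence. A labeling of the $s$ vertices of $K_{m,n}$ uses $s$ distinct values chosen from $\{P_0,P_1,\dots,P_s\}$, omitting exactly one, and by the piecewise formula for $k$ stated at the start of this section (applied to the index $s$) there are $k=k(s)$ even terms among $P_0,\dots,P_s$. Omitting an odd term leaves $k$ even values available, omitting an even term leaves $k-1$; in both situations the complementary odd labels are available in exactly the required quantity. Hence the realizable values of $a$ are precisely $\{k-1,k\}$, and $K_{m,n}$ is Perrin cordial if and only if
\[
\{k-1,\,k\}\cap\{\tfrac{s}{2}-1,\,\tfrac{s}{2},\,\tfrac{s}{2}+1\}\neq\emptyset ,
\]
equivalently $\tfrac{s}{2}-1\le k(s)\le\tfrac{s}{2}+2$.

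Then I would run the seven cases $s\equiv r\pmod 7$ with $r\in\{0,1,\dots,6\}$, substituting the value of $k(s)$ from the formula (the evenness of $s$ merely pins down the parity of $p=\lfloor s/7\rfloor$ and plays no role in the inequality). In every case the upper estimate $k(s)\le\tfrac{s}{2}+2$ holds automatically for $s\ge 0$, while the lower estimate $k(s)\ge\tfrac{s}{2}-1$ becomes a linear inequality in $p$; for example, for $s=7p$ it reads $3p+1\ge\tfrac{7p}{2}-1$, i.e.\ $p\le 4$, i.e.\ $s\le 28$. Solving it in each residue class reproduces exactly the thresholds $28,22,30,38,32,40,34$ listed in the statement, which proves sufficiency of each displayed condition (and, since every step reverses, the converse too).

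The main obstacle I expect is not a single hard step but the bookkeeping: one must treat the seven residue classes of $s$ modulo $7$ in parallel, keep track of the parity of $s$, and confirm that the linear inequalities return precisely the seven tabulated bounds. A subtler point is that one must commit to the convention — used implicitly in the complete-graph and $m$-even complete-bipartite arguments above — that a graph on $s$ vertices is labeled from the first $s+1$ Perrin numbers $P_0,\dots,P_s$ (with one omitted), because the finiteness of the bounds (rather than every such $K_{m,n}$ being cordial) rests entirely on this restriction.
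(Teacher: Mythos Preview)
Your proposal is correct and follows essentially the same route as the paper: reduce via Equation~\eqref{1} to the requirement that roughly half the labels be even, then compare with the count $k(s)$ of even Perrin numbers among $P_0,\dots,P_s$ and solve the resulting inequality in each residue class modulo~$7$. Your version is in fact more careful than the paper's, since you make explicit that $|\tilde\varepsilon|=1$ forces $a=p_1+p_2\in\{\tfrac{s}{2}-1,\tfrac{s}{2},\tfrac{s}{2}+1\}$ and that the omission convention pins $a\in\{k-1,k\}$, whereas the paper only records the lower-bound side of this comparison.
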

\begin{proof}
When both \( m \) and \( n \) are odd, achieving a Perrin cordial labeling requires at least \( \frac{m-1}{2} \) and \( \frac{n-1}{2} \) even Perrin numbers to be assigned (see Equation~\eqref{1}).

Consider the case where \( m + n = 7p \) for some positive integer \( p \). In this scenario, the total number of required even Perrin numbers is
\[
\frac{m-1}{2} + \frac{n-1}{2} = \frac{m+n - 2}{2} = \frac{7p - 2}{2} = \frac{7p}{2} - 1.
\]
To ensure this labeling is possible, we must have
\[
\frac{7p}{2} - 1 \leq 3p + 1,
\]
which simplifies to \( p \leq 4 \), or equivalently, \( m + n \leq 28 \).

The bounds for the remaining congruence classes of \( (m + n) \pmod 7 \) are established through similar case-wise analysis, completing the proof.
\end{proof}

We can immediately derive the following result for Star graphs.
\begin{cor}
The star graph \( S_n \) is Perrin cordial for all $ n \in \{1, 2, \ldots, 32\} \setminus \{25\}$
\end{cor}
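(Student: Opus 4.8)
The plan is to observe that the star $S_n$ is simply the complete bipartite graph $K_{1,n}$, so the corollary should fall out of the two complete--bipartite theorems established immediately above; the only decision to make is which of them governs, and that is settled by the parity of $n$.

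When $n$ is even, $K_{1,n}$ consists of a part of even order (the $n$ leaves) together with a part of odd order (the single centre), so it is an instance of the theorem on $K_{m,n}$ with $m$ even and $n$ odd, taking the odd part to have order $1$. Substituting that value into the bound $m \le 6n+26$ and into the exceptional case $m = 6n+22$ pins down exactly which even $n \le 32$ carry a Perrin cordial labeling and isolates the one even value in the range that must be discarded. When $n$ is odd, both parts of $K_{1,n}$ have odd order, so the theorem on $K_{m,n}$ with $m$ and $n$ both odd applies with $m+n = n+1$; here I would walk through the seven residue classes of $n+1 \pmod 7$, compare $n+1$ with the corresponding threshold from the stated table ($28, 22, 30, 38, 32, 40, 34$), and confirm that every odd $n$ in $\{1,\dots,32\}$ clears its threshold, so the odd values contribute no exception. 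The two degenerate instances ($n = 1$, a single edge, and $n = 2$, the path $P_3$) are cordial by inspection in any case.

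Taking the union of the admissible even and odd values then yields the asserted set. All of the substance is inherited from the earlier theorems, so I expect no conceptual obstacle; the delicate part --- and the easiest place to slip --- is the finite bookkeeping at the upper end of the range, namely determining precisely which $n$ lie below each modular cut-off and correctly locating the single excluded value produced by the even case. To avoid an off-by-one I would re-derive the relevant counts of available even Perrin numbers straight from the piecewise formula for $k$, check them against the explicit parities of $P_0,\dots,P_{33}$, and only then fix the exception list.
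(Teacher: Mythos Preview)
Your plan is exactly what the paper does: the corollary is stated with no separate proof, only the remark that it is ``immediately derived'' from the two preceding complete-bipartite theorems, and you are proposing precisely that specialization $S_n=K_{1,n}$ and the even/odd case split on~$n$.

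One caution about the bookkeeping you flag at the end. You write that the even-$m$, odd-$n$ theorem ``isolates the one \emph{even} value in the range that must be discarded.'' If you actually carry that out with the odd part equal to~$1$, the exceptional clause $m=6n+22$ gives $m=28$, not~$25$; and in the odd--odd theorem every odd $n\le 32$ (including $n=25$, where $m+n=26\equiv 5\pmod 7$ sits well under the bound~$40$) satisfies its sufficient condition. So the two theorems, taken at face value, produce $\{1,\dots,32\}\setminus\{28\}$ rather than $\{1,\dots,32\}\setminus\{25\}$. Before finalizing your write-up you should resolve this discrepancy --- re-check the parity counts of $P_0,\dots,P_{n+1}$ directly for the borderline values, as you already planned, and determine whether the stated exception in the corollary is a typo or whether one of the bipartite theorems has a boundary case that needs adjusting. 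The strategy itself is sound; the numerical endpoint just does not line up with the corollary as printed.
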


A wheel graph $W_n$ is a graph that contains a cycle of $n$ many vertices such that every vertex of the cycle is connected with another vertex, situated at the center, known as the apex (See Figure \ref{wheel}).

\begin{thrm}
$W_n$ is Perrin cordial.
\end{thrm}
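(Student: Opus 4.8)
The plan is to reuse, as in the path and cycle arguments, the fact that only the parity pattern on the vertices matters, and to split $W_n$ into its two natural edge classes. Recall that $W_n$ has $n+1$ vertices and $2n$ edges: the $n$ rim edges of the underlying cycle $C_n$ and the $n$ spokes joining the apex to each rim vertex. Since $|E(W_n)| = 2n$ is even, $\tilde{\varepsilon}$ is forced to be even, so Perrin cordiality here is equivalent to the exact balance $\varepsilon_0 = \varepsilon_1 = n$; the whole task is to exhibit a parity assignment achieving this, using distinct labels from $\{P_0,\dots,P_{n+1}\}$ (of which there are $n+2$, while only $n+1$ are needed).

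First I would assign the apex an even Perrin number. Then, for a parameter $r \ge 1$ chosen later, I assign even Perrin numbers to $2r$ of the rim vertices and odd Perrin numbers to the other $n-2r$, placing them around the cycle so that the even-labeled and the odd-labeled rim vertices each form exactly $r$ contiguous blocks, alternating. Such an arrangement exists precisely when $3r \le n$ (we need at least one rim vertex of each parity per block, i.e. $2r \ge r$ and $n-2r \ge r$), and it produces exactly $2r$ rim edges of label $1$ and $n-2r$ of label $0$. Because the apex is even, a spoke gets label $1$ iff its rim endpoint is odd, so the spokes contribute $n-2r$ edges of label $1$ and $2r$ of label $0$. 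Summing, $\varepsilon_1 = (n-2r) + 2r = n$ and $\varepsilon_0 = 2r + (n-2r) = n$; the labeling is balanced for every admissible $r$, which is the robust core of the argument.

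What remains is to pick $r$ so that the needed $2r+1$ even labels and $n-2r$ odd labels are actually available among $P_0,\dots,P_{n+1}$. Let $k$ be the number of even Perrin numbers in that list, given by the case formula at the start of this section evaluated at index $n+1$, so there are $n+2-k$ odd ones. The two availability conditions read $2r+1 \le k$ and $n-2r \le n+2-k$, i.e. $\tfrac{k-2}{2} \le r \le \tfrac{k-1}{2}$; taking $r = \lceil (k-2)/2\rceil$ meets both, and $r \ge 1$ since $k \ge 3$ for all $n \ge 3$. Finally one verifies the structural bound $3r \le n$: from the explicit formula $k \le (3n+16)/7$, whence $r \le (k-1)/2 \le (3n+9)/14 \le n/3$ once $n \ge 6$, and the three cases $n \in \{3,4,5\}$ are checked directly (there $k \in \{3,4\}$, $r=1$, $3r \le n$).

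The main obstacle I anticipate is purely the bookkeeping of this last step: confirming, residue class by residue class of $n \pmod 7$, that the short interval $\bigl[\tfrac{k-2}{2},\tfrac{k-1}{2}\bigr]$ always contains an integer $r$ compatible with $3r \le n$, plus the handful of base cases. Once an admissible $r$ is produced, the balance $\varepsilon_0 = \varepsilon_1 = n$ is automatic, so the construction closes immediately.
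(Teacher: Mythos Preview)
Your argument is correct and is a genuinely different construction from the paper's. The paper labels the apex with an \emph{odd} Perrin number, places one contiguous block of $p_1$ even rim vertices followed by an alternating segment of length $2p_2$, computes the imbalance $\varepsilon = 2n - 2p_1 - 6p_2 - 5$, and then solves for $p_1,p_2$ residue-by-residue in $n+1 \pmod 7$ to force $|\varepsilon|\le 1$. Your construction instead makes the apex \emph{even} and uses $r$ even blocks and $r$ odd blocks on the rim; the point is that then the rim contributes $2r$ odd edges while the spokes contribute $n-2r$, so $\varepsilon_1 = n$ identically for every admissible $r$. This decouples the balance condition from the availability condition: balance is free, and the only work left is the short interval check $\lceil (k-2)/2\rceil \le r \le \lfloor (k-1)/2\rfloor$ together with $3r\le n$, which you handle with the uniform bound $k \le (3n+16)/7$ plus three base cases. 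Your route is cleaner and avoids the seven-way case split the paper uses; the paper's scheme, on the other hand, is closer in spirit to its earlier path/cycle constructions and reuses that template directly.
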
 
\begin{proof}
We begin by denoting the vertices of the wheel graph $W_n$, where $ V(W_n) = \{v\} \cup \{v_1, v_2, \ldots, v_n\},$ with $v$ denoting the central (hub) vertex, and $v_1, v_2, \ldots, v_n$ forming the cycle. We assign {even Perrin numbers} to the vertices $\{v_1, v_2, \ldots, v_{p_1}\}$. The next group of vertices, $\{v_{p_1+1}, v_{p_1+2}, \ldots, v_{p_1+2p_2}\}$ receives Perrin numbers with {alternating parity}, starting with an {odd} Perrin number. The remaining $n - p_1 - 2p_2$ vertices, together with the central hub vertex $v$, are assigned {odd Perrin numbers}. This assignment requires a total of $p_1 + p_2$ even Perrin numbers and $n - (p_1 + p_2) + 1$ odd Perrin numbers. This assignment yields  ${\varepsilon}= 2n-2p_1-6p_2-5$. Now, if we consider $n + 1 = 7p + k$, for $0 \le k \le 6$, then to construct such a labeling scheme, we define:
\[
p_1 =
\begin{cases}
p + 3, & \text{if } k \in \{0, 6\}, \\
p + k, & \text{otherwise},
\end{cases}
\qquad
p_2 =
\begin{cases}
2p - 2, & \text{if } k = 0, \\
2p, & \text{if } k = 6, \\
2p - 1, & \text{otherwise}.
\end{cases}
\]
With this choice of $p_1$  and $p_2$, the labeling ensures that the imbalance $\varepsilon$ satisfies $\vert \varepsilon \vert \le 1$. Thus, the construction meets the requirements for Perrin cordial labeling. The reader may verify the parity distribution to confirm that the edge labels are balanced as claimed, completing the proof.
\end{proof}

\begin{figure}[h!]
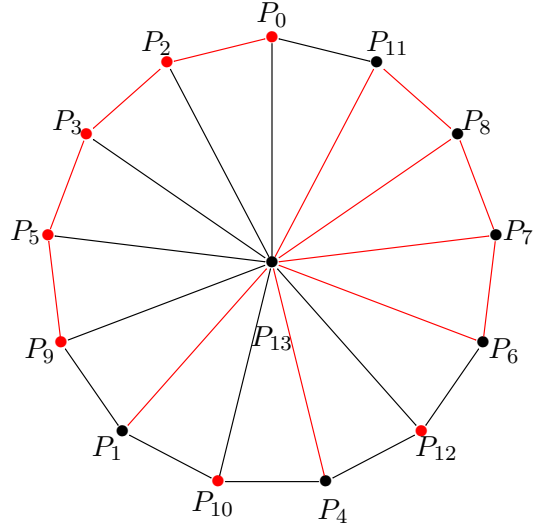

\[ \xygraph{
!{<0cm,0cm>;<0cm,1cm>:<-1cm,0cm>::}
!{(0,0);a(0)**{}?(0)}*{\bullet}="a13" !{(0,0);a(180)**{}?(1)}*{P_{13}}
!{(0,0);a(0)**{}?(3)}*[red]{\bullet}="a0" !{(0,0);a(0)**{}?(3.3)}*{P_{0}}
!{(0,0);a(27.7)**{}?(3)}*[red]{\bullet}="a12" !{(0,0);a(27.7)**{}?(3.3)}*{P_{2}}
!{(0,0);a(55.4)**{}?(3)}*[red]{\bullet}="a11" !{(0,0);a(55.4)**{}?(3.3)}*{P_{3}}
!{(0,0);a(83)**{}?(3)}*[red]{\bullet}="a10" !{(0,0);a(83)**{}?(3.3)}*{P_{5}}
!{(0,0);a(110.8)**{}?(3)}*[red]{\bullet}="a9" !{(0,0);a(110.8)**{}?(3.3)}*{P_{9}}
!{(0,0);a(138.5)**{}?(3)}*{\bullet}="a8" !{(0,0);a(138.5)**{}?(3.3)}*{P_{1}}
!{(0,0);a(166.2)**{}?(3)}*[red]{\bullet}="a7" !{(0,0);a(166.2)**{}?(3.3)}*{P_{10}}
!{(0,0);a(193.8)**{}?(3)}*{\bullet}="a6" !{(0,0);a(193.8)**{}?(3.4)}*{P_{4}}
!{(0,0);a(221.5)**{}?(3)}*[red]{\bullet}="a5" !{(0,0);a(221.5)**{}?(3.3)}*{P_{12}}
!{(0,0);a(249.2)**{}?(3)}*{\bullet}="a4" !{(0,0);a(249.2)**{}?(3.3)}*{P_{6}}
!{(0,0);a(277)**{}?(3)}*{\bullet}="a3" !{(0,0);a(277)**{}?(3.3)}*{P_{7}}
!{(0,0);a(304.6)**{}?(3)}*{\bullet}="a2" !{(0,0);a(304.6)**{}?(3.3)}*{P_{8}}
!{(0,0);a(332.3)**{}?(3)}*{\bullet}="a1" !{(0,0);a(332.3)**{}?(3.3)}*{P_{11}}
"a0"-@[red]"a12" "a12"-@[red]"a11" "a11"-@[red]"a10" "a10"-@[red]"a9" "a9"-"a8" "a8"-"a7"
"a7"-"a6" "a6"-"a5" "a5"-"a4" "a4"-@[red]"a3" "a3"-@[red]"a2" "a2"-@[red]"a1"
"a1"-"a0" 
"a0"-"a13" "a1"-@[red]"a13" "a2"-@[red]"a13" "a3"-@[red]"a13" "a4"-@[red]"a13" "a5"-"a13" "a6"-@[red]"a13" "a7"-"a13" "a8"-@[red]"a13" "a9"-"a13" "a10"-"a13" "a11"-"a13" "a12"-"a13" 
} 
\]\vspace{-.28 cm}
\caption{Perrin cordial labeling for $W_{14}$ graph}
\label{wheel}
\end{figure}


Next, we make the following observation regarding all possible edge compositions of a triangular blade (see Figure~\ref{oddeven}). In the illustration, red edges (respectively, black edges) represent edges labeled with even (respectively, odd) Perrin numbers, including both vertex and induced edge labels. Notably, within any triangle, the number of edges labeled with odd values is always either zero or two. The following proposition appears as a direct consequence of this observation. This result is generic in nature as it is independent of the underlying vertex labeling scheme.

\begin{prop}\label{Prop-odd-even}
    The number of odd edges on any blade $(C_3)$ is always even. 
\end{prop}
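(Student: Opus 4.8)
The plan is to argue by a parity count on the three vertices of the triangle $C_3$. Label the three vertices $u,v,w$ and recall that the induced edge label $f^*(xy)=(f(x)+f(y))\pmod 2$ is $0$ precisely when $f(x)$ and $f(y)$ have the same parity, and $1$ precisely when they have opposite parity. So an edge is ``odd'' exactly when its two endpoints carry Perrin numbers of different parity. The whole question reduces to: given three vertices each colored ``even'' or ``odd'' (according to the parity of the assigned Perrin number), how many of the three pairs are bichromatic?

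The key step is the case split on how many of $u,v,w$ are assigned even Perrin numbers — call this number $a\in\{0,1,2,3\}$. If $a=0$ or $a=3$, all three vertices share the same parity, so every edge is even and $\varepsilon_1^{C_3}=0$. If $a=1$ or $a=2$, there is exactly one vertex whose parity differs from the other two; that lone vertex contributes two bichromatic edges (one to each of the other two), while the edge between the two like-parity vertices is even, giving $\varepsilon_1^{C_3}=2$. In every case the number of odd edges is in $\{0,2\}$, hence even. This matches the observation illustrated in Figure~\ref{oddeven}.

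I expect this to be essentially immediate — there is no real obstacle, since the statement is a finite check over the $2^3=8$ parity patterns (or the four values of $a$ after collapsing symmetry). The only thing worth stating carefully is that the argument is independent of which specific Perrin numbers are used and of the injectivity of $f$: only the induced parities of the three vertices matter, which is exactly the ``generic in nature'' remark preceding the proposition. One could alternatively phrase it via $\mathbb{Z}_2$-linear algebra: summing $f^*$ over the three edges of the triangle gives $2(f(u)+f(v)+f(w))\equiv 0 \pmod 2$, so the number of odd edges is even; I would mention this one-line version as well, since it generalizes cleanly and makes the ``always even'' conclusion transparent without any case analysis.
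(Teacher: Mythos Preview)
Your proof is correct and follows essentially the same route as the paper, which establishes the proposition by exhaustively inspecting the four parity configurations of the triangle's vertices (the content of Figure~\ref{oddeven}). Your additional $\mathbb{Z}_2$-sum observation --- that $\sum_{e\in E(C_3)} f^*(e)\equiv 2(f(u)+f(v)+f(w))\equiv 0\pmod 2$ --- is a clean bonus the paper does not mention, and it generalizes immediately to any cycle of odd length.
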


\begin{figure}[h!]
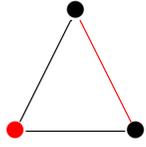
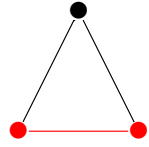
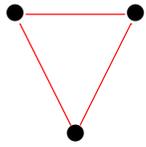
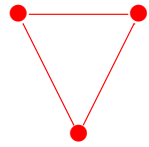

\centering
   \begin{subfigure}[b]{0.3\textwidth}
         \[ \xygraph{
!{<0cm,0cm>;<0cm,0.8cm>:<0.8cm,0cm>::}
!{(1,1)}*[red]{\scalebox{1.5}{$\bullet$}}="a0" 
!{(1,3)}*[black]{\scalebox{1.5}{$\bullet$}}="a1" 
!{(3,2)}*[black]{\scalebox{1.5}{$\bullet$}}="a2" 
"a1"-@[black]"a0"  "a0"-@[black]"a2" "a1"-@[red]"a2"
} 
\]    
     \subcaption{$1$ Even, $2$ Odds}
     \end{subfigure}
     \hspace{1 cm}
     \begin{subfigure}[b]{0.3\textwidth}
          \[ \xygraph{
!{<0cm,0cm>;<0cm,.8  cm>:<-.8 cm,0cm>::}
!{(1,1)}*[red]{\scalebox{1.5}{$\bullet$}}="a0" 
!{(1,3)}*[red]{\scalebox{1.5}{$\bullet$}}="a1" 
!{(3,2)}*[black]{\scalebox{1.5}{$\bullet$}}="a2" 
"a1"-@[red]"a0"  "a0"-@[black]"a2" "a1"-@[black]"a2"
} 
\]    
     \subcaption{$1$ Even, $2$ Odds }
     \end{subfigure}
     
      \vspace{0.2cm}
     \begin{subfigure}[b]{0.3\textwidth}
          \[ \xygraph{
!{<0cm,0cm>;<0cm,0.8 cm>:<-.8 cm,0cm>::}
!{(-1,-1)}*[black]{\scalebox{1.5}{$\bullet$}}="a0"
!{(-1,-3)}*[black]{\scalebox{1.5}{$\bullet$}}="a1" 
!{(-3,-2)}*[black]{\scalebox{1.5}{$\bullet$}}="a2"
"a1"-@[red]"a0"  "a0"-@[red]"a2" "a1"-@[red]"a2"
} 
\]    
     \subcaption{$3$ Even, $0$ Odds}
     \end{subfigure}
         \hspace{1 cm}
     \begin{subfigure}[b]{0.3\textwidth}
          \[ \xygraph{
!{<0cm,0cm>;<0cm,.8 cm>:<-0.8 cm,0cm>::}
!{(-1,-1)}*[red]{\scalebox{1.5}{$\bullet$}}="a0" 
!{(-1,-3)}*[red]{\scalebox{1.5}{$\bullet$}}="a1" 
!{(-3,-2)}*[red]{\scalebox{1.5}{$\bullet$}}="a2" 
"a1"-@[red]"a0"  "a0"-@[red]"a2" "a1"-@[red]"a2"
} 
\]    
     \subcaption{$3$ Even, $0$ Odds}
     \end{subfigure}
   \caption{Triangular blades have only two possible edge compositions in cordial labeling}
\label{oddeven}
\end{figure}
Next, we define the triangular snake graph $\TS_{n}$ in the following manner. We start with a path $P_{n+1}$ and for each pair of consecutive vertices of the path, attach a pendant vertex (see Figure~\ref{TS9}). We denote the vertex set of $\TS_{n}$ as $V(\TS_{n}) = \{v_1, v_2, \dots, v_{n+1}\} \cup \{u_1, u_2, \dots, u_n\}$, where the $v_i$ are the vertices of the underlying path and each $u_i$ is a vertex adjacent to both $v_i$ and $v_{i+1}$.
\begin{thrm}
The triangular snake graph $\TS_{n}$ satisfies the following Perrin cordiality conditions:
\begin{enumerate}
    \item $\TS_{n}$ is not Perrin cordial if $n\equiv 2 \ (\mod 4)$
    \item $\TS_{n}$ is Perrin cordial if $n\not \equiv 2 \ (\mod 4)$
\end{enumerate}
\end{thrm}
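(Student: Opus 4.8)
The plan is to first read off the structure: $\TS_n$ has $2n+1$ vertices and $3n$ edges, and its edge set is partitioned into the $n$ triangular blades $B_i$ with vertex set $\{v_i,v_{i+1},u_i\}$, since each pendant $u_i$ and each path edge $v_iv_{i+1}$ lies in exactly one $B_i$. By Proposition~\ref{Prop-odd-even} each $B_i$ contributes an even number of odd edges, so $\varepsilon_1$ is even; writing $\varepsilon_1=2t$, where $t$ is the number of blades whose three vertices do not all have the same parity, we get $\varepsilon_0=3n-2t$ and imbalance $\varepsilon=\varepsilon_0-\varepsilon_1=3n-4t$. Part (1) is then immediate: Perrin cordiality requires $|3n-4t|\le 1$, i.e.\ $4t\in\{3n-1,3n,3n+1\}$, but when $n\equiv 2\pmod 4$ we have $3n\equiv 2\pmod 4$, so none of $3n-1,3n,3n+1$ is divisible by $4$ and no admissible $t$ exists, regardless of which Perrin numbers are assigned.

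For part (2), when $n\not\equiv 2\pmod 4$ there is an integer $t^{*}$ with $|3n-4t^{*}|\le 1$ (namely $t^{*}=\tfrac{3n}{4},\ \tfrac{3n+1}{4},\ \tfrac{3n-1}{4}$ for $n\equiv 0,1,3\pmod 4$), and the task is to realize exactly $t^{*}$ non-monochromatic blades with an admissible parity pattern. I would use the template: make $v_1,\dots,v_a$ even and $u_1,\dots,u_{a-1}$ even, so blades $B_1,\dots,B_{a-1}$ are monochromatic-even while $B_a$ is non-monochromatic; make $u_{a+1},\dots,u_{a+t^{*}-1}$ even, turning those $t^{*}-1$ blades non-monochromatic; leave every remaining vertex odd, so blades $B_{a+t^{*}},\dots,B_n$ are monochromatic-odd; and, if one more even vertex is needed, additionally set $u_a$ even (this changes no blade's type, as $u_a$ lies only in $B_a$). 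This yields exactly $t^{*}$ non-monochromatic blades using either $2a+t^{*}-2$ or $2a+t^{*}-1$ even vertices, so as $a$ ranges over admissible values the number of even vertices used covers the whole interval $[\,t^{*},\,2n-t^{*}\,]$. Since the $2n+1$ labels of $\TS_n$ are drawn from $\{P_0,\dots,P_{2n+1}\}$ with one label omitted, the number $E$ of available even labels is $k$ or $k-1$, where $k$ counts the even Perrin numbers in $\{P_0,\dots,P_{2n+1}\}$ (read off from the displayed formula at the start of this section); choosing $a$ so that $E$ falls in the achievable range and assigning Perrin numbers consistently with the parity pattern completes the construction.

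The crux, and the step I expect to be the main obstacle, is this last counting: one must check that for every $n\not\equiv 2\pmod 4$ some $E\in\{k-1,k\}$ lies in $[\,t^{*},\,2n-t^{*}\,]$. Asymptotically there is ample room, since $E/n\to 6/7$ while $t^{*}/n\to 3/4$ and $(2n-t^{*})/n\to 5/4$, but to make this airtight one splits into residue classes of $n$ modulo $28$ — simultaneously pinning down the mod-$4$ target $t^{*}$ and the mod-$7$ value of $k$ — and verifies the finitely many small $n$ with $n\not\equiv 2\pmod 4$ by hand. This is precisely the feasibility check that fails for $K_n$ and for certain $K_{m,n}$; the substance of part (2) is that for the triangular snake it always succeeds.
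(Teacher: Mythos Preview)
Your proposal is correct and follows essentially the same approach as the paper: the impossibility for $n\equiv 2\pmod 4$ is exactly the paper's argument via Proposition~\ref{Prop-odd-even} (the edge set of $\TS_n$ is a disjoint union of the $n$ blades, forcing $\varepsilon_1$ even), and your block construction---a monochromatic-even run, then a run of non-monochromatic blades, then a monochromatic-odd run---is a rearrangement of the paper's scheme with parameters $p_1,p_2$, with both proofs deferring the residual residue-class bookkeeping. Your explicit reduction to the single parameter $t$ (number of non-monochromatic blades) and the interval $[t^{*},\,2n-t^{*}]$ for the achievable even-vertex count is a slightly cleaner way to organize the same feasibility check the paper carries out case-by-case.
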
 
\begin{proof}
First, observe that if $n \equiv 2 \pmod{4}$, then the total number of edges is $\vert E \vert = 12k + 6$, assuming $n = 4k + 2$ (for some non-negative $k$). To achieve a Perrin cordial labeling, we would require exactly $6k + 3$ odd-labeled edges among those contributed by the $n$ triangular blades. However, by Proposition~\ref{Prop-odd-even}, each triangular blade contributes an even number of odd edges, making it impossible to obtain an odd total count such as $6k + 3$. Hence, a Perrin cordial labeling cannot exist in this case.

We now show that the triangular snake graph $\TS_n$ admits a Perrin cordial labeling for all values of $n$ except when $n \equiv 2 \pmod{4}$. Consider assigning even Perrin numbers to the vertices in the sets $\{u_1, u_2, \cdots, u_{p_1}\} \cup \{u_{n+1-p_2}, u_{n+2-p_2}, \cdots, u_n\} \cup \{v_{n+1-p_2}, v_{n+2-p_2}, \cdots, v_{n+1}\}$, and assigning odd Perrin numbers to the remaining vertices. This labeling results in parameters $\varepsilon_0 = 3n - 2p_1 - 2$ and $\varepsilon_1 = 2p_1 + 2$.

Now, we construct a case-wise proof in this case. Any integer $n$ can be represented as $n=7p+q$ for some non-negative $p$ and $k\in\{0,1,\cdots,6\}$. We begin with $n = 7p$, for some positive integer $p$. Then, the number of even Perrin numbers used in this assignment is $p_1+2p_2+1=6p + 1$, and the number of odd Perrin numbers is $2n-p_1-2p_2=8p + 1$. If we skip one odd Perrin number, the resulting imbalance is $\varepsilon = 3n-4p_1-4=8p_2 - 3p - 4$, which remains within 1 in absolute value when $p \equiv 1, 4, 7 \pmod{8}$. On the other hand, if we skip one even Perrin number, we obtain $\varepsilon =8p_2 - 3p$, which stays within 1 when $p \equiv 0, 3, 5 \pmod{8}$. Together, these cover all values of $p$ except those that yield $n \equiv 2 \pmod{4}$. The remaining cases, where $n = 7p + q$ for $q \in {1, 2, \cdots, 6}$, can be addressed and verified using analogous arguments.
\end{proof}

\begin{figure}
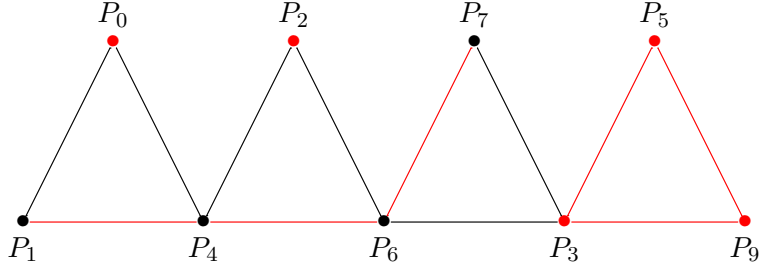

\[ \xygraph{
!{<0cm,0cm>;<1.2 cm,0cm>:<0cm,1.2 cm>::}
!{(-3,0)}*[red]{\bullet}="a0" !{(-3,0.3) }*{P_{0}}
!{(-1,0)}*[red]{\bullet}="a1" !{(-1,0.3)}*{P_{2}}
!{(1,0)}*{\bullet}="a2" !{(1,0.3) }*{P_{7}}
!{(3,0)}*[red]{\bullet}="a3" !{(3,0.3) }*{P_{5}}
!{(-4,-2)}*{\bullet}="b0" !{(-4,-2.3) }*{P_{1}}
!{(-2,-2)}*{\bullet}="b1" !{(-2,-2.3)}*{P_{4}}
!{(0,-2)}*{\bullet}="b2" !{(0,-2.3) }*{P_{6}}
!{(2,-2)}*[red]{\bullet}="b3" !{(2,-2.3) }*{P_{3}}
!{(4,-2)}*[red]{\bullet}="b4" !{(4,-2.3)}*{P_{9}}
"a0"-"b0" "a0"-"b1" "a1"-"b1" "a1"-"b2" "a2"-@[red]"b2" "a2"-"b3"  "a3"-@[red]"b3" "a3"-@[red]"b4"  
"b0"-@[red]"b1" "b1"-@[red]"b2" "b2"-"b3" "b3"-@[red]"b4" 
} \]
\caption{Perrin cordial labeling of $\TS_{4}$ graph}
\label{TS9}
\end{figure}

\noindent The friendship graph $F_n$ consists of $n$ cycles of length three ($C_3$), all sharing a single common vertex (see Figure \ref{friendshipgraph}). For clarity, we refer to this common vertex as the apex, denoted by $v$, and each individual cycle as a blade of the graph. By construction, the graph has $2n + 1$ vertices and $3n$ edges. We define the vertex set as $V(F_n) = \{v\} \cup \{v_1, v_2, \dots, v_{2n}\}$, where the $i^{\text{th}}$ blade corresponds to the triangle $C_i = \{v, v_{2i-1}, v_{2i}, v\}$ for $1 \le i \le n$.

\begin{thrm}
The friendship graph $F_n$ satisfies the following Perrin cordiality conditions:
\begin{enumerate}
    \item $F_{n}$ is not Perrin cordial if $n\equiv 2 \ (\mod 4)$
    \item $F_{n}$ is Perrin cordial if $n\not \equiv 2 \ (\mod 4)$
\end{enumerate}
\end{thrm}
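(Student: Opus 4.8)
The plan is to mirror the structure of the triangular snake theorem, since the friendship graph $F_n$ is built from $n$ triangular blades sharing a common apex, with $2n+1$ vertices and $3n$ edges.

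For the negative direction, suppose $n \equiv 2 \pmod 4$, say $n = 4k+2$. Then $|E(F_n)| = 3n = 12k + 6$, so a Perrin cordial labeling would require exactly $6k+3$ edges of each parity. But every blade is a copy of $C_3$, and by Proposition~\ref{Prop-odd-even} each blade contributes an even number of odd edges; summing over the $n$ blades forces the total number of odd edges to be even. Since $6k+3$ is odd, no such labeling exists, and $F_n$ is not Perrin cordial.

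For the positive direction, I would exhibit an explicit parity-based labeling, exactly as in the triangular snake and wheel proofs. Label the apex $v$ with an odd Perrin number. Assign even Perrin numbers to the vertices $\{v_1, v_2, \ldots, v_{p_1}\}$ (whole blades made entirely even on their non-apex vertices), let the next block of vertices carry alternating parities starting with odd, and give the remaining vertices odd labels; this uses $p_1 + p_2$ even Perrin numbers and $n - (p_1+p_2)+1$ odd ones. A whole "even" blade $\{v, v_{2i-1}, v_{2i}\}$ with $v$ odd and both others even contributes two odd edges and one even edge; a whole "odd" blade contributes one odd edge and two even edges; the alternating transition blade contributes the remaining imbalance. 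Counting gives $\varepsilon_0$ and $\varepsilon_1$ as explicit linear functions of $n$, $p_1$, $p_2$, yielding an imbalance $\varepsilon$ of the form $\varepsilon = an - bp_1 - cp_2 - d$ for small constants. Then, writing $n = 7p + q$ with $q \in \{0,1,\ldots,6\}$ and recalling the count $k$ of even Perrin numbers available among $P_0,\ldots,P_{2n+1}$ (from the table at the start of Section~2, applied to the index $2n+1$), I would choose $p_1$ and $p_2$ — possibly skipping one even or one odd Perrin number as in the $\TS_n$ proof — so that $|\varepsilon| \le 1$, running through the residues of $p$ modulo a small number (likely $4$ or $8$) and confirming that every residue except those giving $n \equiv 2 \pmod 4$ is covered.

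The main obstacle will be the bookkeeping in the positive direction: ensuring that enough even Perrin numbers are actually available in $\{P_0, \ldots, P_{2n+1}\}$ to support the choice of $p_1, p_2$ (the index here is $2n+1$, which shifts the residue classes in the opening table relative to the $\TS_n$ case), and checking that the "skip one odd / skip one even" flexibility exactly covers all residues of $p$ corresponding to $n \not\equiv 2 \pmod 4$. I expect the friendship graph to be slightly easier than $\TS_n$ here because the underlying structure is cleaner — all blades share a single apex rather than a path — so the imbalance formula should be a genuinely simple linear expression, and a figure (analogous to Figure~\ref{TS9}, e.g. for $F_4$ or $F_5$) would make the construction transparent. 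The case-wise verification for $q \in \{1,2,\ldots,6\}$ can be dispatched with the same "analogous arguments" remark used throughout the paper.
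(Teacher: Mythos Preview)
Your plan is essentially the paper's proof: the negative direction is identical (Proposition~\ref{Prop-odd-even} forces the total odd-edge count to be even, contradicting the required $6k+3$), and the positive direction is the same block-parity scheme with the apex odd, some blades all-even on their non-apex vertices, some mixed, the rest all-odd, followed by a case analysis on $n=7p+q$ with residues of $p$ modulo $4$. One slip to fix when you carry it out: your stated counts ``$p_1+p_2$ even and $n-(p_1+p_2)+1$ odd'' sum to $n+1$, which is the wheel-graph total, not the $2n+1$ vertices of $F_n$; the paper parametrizes with $2p_1+p_2$ even labels (first $p_1$ blades both-even, next $p_2$ blades one-even), obtaining $\varepsilon = 3n-4p_1-4p_2$, and then only needs to skip one odd Perrin number and split on $p \bmod 4$.
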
 
\begin{proof}
Suppose $n \equiv 2 \pmod{4}$. Then, we may write $n = 4m + 2$ for some positive integer $m$, which implies that the graph contains $4m + 2$ triangular blades. Since each triangle contributes 3 edges, the total number of edges in $F_n$ is $3n = 12m + 6$. To satisfy the Perrin cordial condition, we would require exactly half of the edges (i.e., $6m + 3$) to be odd-labeled. However, this leads to a contradiction: by Proposition~\ref{Prop-odd-even}, each triangle contributes an even number of odd-labeled edges, making it impossible to reach an odd total such as $6m + 3$. Therefore, $F_n$ cannot be Perrin cordial when $n \equiv 2 \pmod{4}$. 

Conversely, for all other values of $n$, we construct a Perrin cordial labeling using a strategy similar to the one described earlier. Specifically, we assign even Perrin numbers to the vertices in the sets $\{v_1, v_2, \dots, v_{2p_1}\} \cup \{v_{2p_1+1}, v_{2p_1+3}, \dots, v_{2p_1+2p_2-1}\}$ and assign odd Perrin numbers to the remaining vertices. This labeling yields the parameters $\varepsilon_0 = 3n - 2p_1 - 2p_2$ and $\varepsilon_1 = 2p_1 + 2p_2$.

As before, consider $n = 7p$ for some positive integer $p$. Under this assignment, the number of even and odd Perrin numbers used are $2p_1+p_2=6p + 1$ and $p_2+2(n-p_1-p_2)+1=8p + 1$, respectively. Without loss of generality, we skip one odd Perrin number, and the resulting imbalance becomes $\varepsilon = 3n-4p_1-4p_2=4p_1 - 3p - 4$, which lies within 1 in absolute value when $p \equiv 0, 1, 3 \pmod{4}$. It is worth noting that when $p \equiv 2 \pmod{4}$, we recover the excluded case $n \equiv 2 \pmod{4}$. The remaining cases, where $n = 7p + q$ for $q \in {1, 2, \dots, 6}$, can be verified through similar reasoning. 
\end{proof}

\begin{figure}[h!]
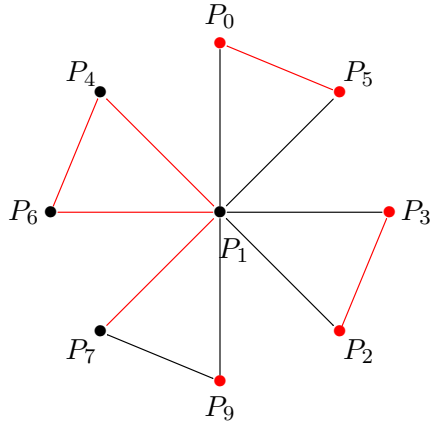

\[
\xygraph{
!{<0cm,0cm>;<0cm,.9 cm>:<-.9 cm,0cm>::}
!{(0,0);a(0)**{}?(0)}*{\bullet}="b" !{(0,0);a(200)**{}?(.6)}*{P_{1}}
!{(0,0);a(0)**{}?(2.5)}*[red]{\bullet}="a0" !{(0,0);a(0)**{}?(2.9)}*{P_{0}}
!{(0,0);a(45)**{}?(2.5)}*{\bullet}="a7" !{(0,0);a(45)**{}?(2.9)}*{P_{4}}
!{(0,0);a(90)**{}?(2.5)}*{\bullet}="a6" !{(0,0);a(90)**{}?(2.9)}*{P_{6}}
!{(0,0);a(135)**{}?(2.5)}*{\bullet}="a5" !{(0,0);a(135)**{}?(2.9)}*{P_{7}}
!{(0,0);a(180)**{}?(2.5)}*[red]{\bullet}="a4" !{(0,0);a(180)**{}?(2.9)}*{P_{9}}
!{(0,0);a(225)**{}?(2.5)}*[red]{\bullet}="a3" !{(0,0);a(225)**{}?(2.9)}*{P_{2}}
!{(0,0);a(270)**{}?(2.5)}*[red]{\bullet}="a2" !{(0,0);a(270)**{}?(2.9)}*{P_{3}}
!{(0,0);a(315)**{}?(2.5)}*[red]{\bullet}="a1" !{(0,0);a(315)**{}?(2.9)}*{P_{5}}
"a0"-@[red]"a1" 
"a2"-@[red]"a3" 
"a4"-"a5" 
"a6"-@[red]"a7"
"b"-"a0" 
"b"-"a1" 
"b"-"a2" 
"b"-"a3" 
"b"-"a4" 
"b"-@[red]"a5" 
"b"-@[red]"a6" 
"b"-@[red]"a7" 
} 
\]
\caption{Perrin cordial labeling for $F_4$ graph}
\label{friendshipgraph}
\end{figure}

Bistar $B_{m,n}$ is the graph obtained by joining the apex vertices of two copies of star, viz.  $K_{1,m}$ and $K_{1,n}$, by an edge. We identify the vertex set as $\{u_i:1\le i\le m\}\cup \{v_i:1\le i\le n\}\cup \{u,v\}$ where $u,v$ are the apex vertices and $u_i,v_i$ are the pendant vertices connected with $u$ and $v$ respectively. The vertex and edge set cardinalities are given by $m+n+2$ and $m+n+1$, respectively. 
\begin{thrm}
The bistar graph $B_{m,n}$ admits a Perrin cordial labeling if and only if $1<m+n\le 26$ or $m+n=28,29,30,32,36$.
\end{thrm}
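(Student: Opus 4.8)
The bistar $B_{m,n}$ has $m+n+2$ vertices and $m+n+1$ edges, so a Perrin cordial labeling draws its labels from the first $m+n+2$ Perrin numbers $P_0,\dots ,P_{m+n+1}$; write $k$ for the number of even terms among them, read off from the case formula of Section~2 with the parameter there equal to $m+n+1$. Since only parities affect the induced edge labels, I would record a candidate labeling by the parities $\alpha=[f(u)\text{ even}]$ and $\beta=[f(v)\text{ even}]$ of the two apexes, together with the numbers $p_1\le m$ and $p_2\le n$ of pendants of $u$, respectively of $v$, that receive even labels; exhausting the label set forces $\alpha+\beta+p_1+p_2=k$. A direct count of even edges — $uu_i$ is even iff $f(u_i)$ has parity $\alpha$, $vv_j$ is even iff $f(v_j)$ has parity $\beta$, and $uv$ is even iff $\alpha=\beta$ — expresses $\varepsilon_0$, and hence $\varepsilon=2\varepsilon_0-(m+n+1)$, as an affine function of $p_1,p_2$. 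Three regimes appear: for $\alpha=\beta=0$ one gets $\varepsilon=(m+n+1)-2k$; for $\alpha=\beta=1$ one gets $\varepsilon=2k-(m+n+3)$; and for $\alpha\ne\beta$ one gets $\varepsilon=4p-2k+1\pm(n-m)$ with $p\in\{p_1,p_2\}$ still free — this last is the flexible case, since a parameter survives the constraint.

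For sufficiency the plan is to exhibit, for each value of $m+n$ in the claimed set, an explicit choice of $(\alpha,\beta,p_1,p_2)$ with $0\le p_1\le m$, $0\le p_2\le n$, $\alpha+\beta+p_1+p_2=k$, and $|\varepsilon|\le 1$. When $m+n$ is even I would first try $\alpha=\beta=0$, which succeeds exactly when $|(m+n+1)-2k|\le1$, and otherwise pass to $\alpha\ne\beta$ and solve $|4p-2k+1+(n-m)|\le1$ for $p$ in the admissible range. When $m+n$ is odd, $m$ and $n$ have opposite parities, so a balanced assignment is possible only with $\alpha\ne\beta$, and the construction is organized by the residue of $m+n$ modulo $7$, which fixes $k$ through the Section~2 formula. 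In each of the seven residue classes the inequality $|\varepsilon|\le1$ has an in-range solution precisely up to a threshold on $m+n$, and I expect the seven thresholds, together with the places where $k$ just meets or just misses the $\approx(m+n)/2$ even labels that any balanced assignment needs, to assemble into $\{\,m+n\le 26\,\}\cup\{28,29,30,32,36\}$, the omissions $27,31,33,34,35$ being exactly the sizes where $k$ falls one short. I would include a short table of $(m+n,k)$ for $26\le m+n\le 38$ to make the casework transparent.

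For the converse I would argue uniformly: over \emph{every} labeling, $\varepsilon_0$ is a linear function of $p_1$ and $p_2$ constrained by $\alpha+\beta+p_1+p_2=k$, so it ranges over an interval of consecutive integers (of a single parity when $\alpha\ne\beta$) whose endpoints are controlled by $k$ and by the boundary placements $p_1\in\{0,m\}$, $p_2\in\{0,n\}$; one then checks that for each excluded value of $m+n$ this interval contains neither $\lfloor(m+n+1)/2\rfloor$ nor $\lceil(m+n+1)/2\rceil$, so no cordial labeling can exist. This is the delicate part, and the step I expect to be the main obstacle: sufficiency needs only one good labeling, whereas necessity must rule out all of them, which requires first proving that $\varepsilon_0$ is genuinely extremized by the boundary choices of $p_1,p_2$ in each residue class, and then comparing those extrema against $k$. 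Certifying the five sporadic large values $28,29,30,32,36$ that stay cordial while their neighbours do not — and doing so for every pair $(m,n)$ with the given sum rather than for a single representative — is where the argument will need the most bookkeeping.
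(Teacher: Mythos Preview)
Your framework is more careful than the paper's. The paper fixes both apex vertices to receive odd labels (your case $\alpha=\beta=0$) from the outset --- declaring this ``without loss of generality'' --- and never revisits that choice; the entire argument is the computation $\varepsilon=(m+n+1)-2(p_1+p_2)$ with $p_1+p_2$ pinned to the available even count, carried out residue-by-residue in $m+n+2\pmod 7$. You, by contrast, keep all four parity patterns $(\alpha,\beta)$ in play. One minor discrepancy: the paper's convention draws labels from $\{P_0,\dots,P_{m+n+2}\}$ (one more index than the vertex count) and drops one term, so your $k$ should be read off at parameter $m+n+2$, and the constraint becomes $\alpha+\beta+p_1+p_2\in\{k-1,k\}$ depending on whether the dropped term is even or odd.

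The real problem is not your method but your expectation. Your $\alpha\ne\beta$ regime --- precisely because, as you note, ``a parameter survives the constraint'' --- is far more permissive than the paper's scheme, and if you actually carry it through you will \emph{not} land on the stated set. Take $m=13$, $n=14$, so $m+n=27$ is excluded by the theorem. Among $P_0,\dots,P_{29}$ there are $13$ even terms; skip one odd term to leave $13$ evens and $16$ odds. Label $u$ even, $v$ odd, six of the $u_i$ even, and six of the $v_j$ even: this uses exactly those counts, and the edge tally is $\varepsilon_0=6+8=14=7+6+1=\varepsilon_1$, so $B_{13,14}$ \emph{is} Perrin cordial. The same mechanism works for arbitrarily large $m+n$ (for instance $B_{18,22}$ with $p_1=8$, $p_2=10$ gives $\varepsilon_0=20$, $\varepsilon_1=21$). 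Thus the ``only if'' direction of the theorem is false as stated; the paper's proof establishes only that the particular scheme $\alpha=\beta=0$ succeeds exactly on the listed values of $m+n$. Your case analysis is the right way to attack the problem, but it leads to a different (and much larger) answer, not to a verification of this one --- so the step you flagged as ``the main obstacle'' will in fact fail, and you should be prepared to correct the statement rather than prove it.
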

\begin{proof}
Without loss of generality, we begin by assigning even Perrin numbers to the vertices in the sets $\{u_1, u_2, \dots, u_{p_1}\} \cup \{v_1, v_2, \dots, v_{p_2}\}$, while assigning odd Perrin numbers to the remaining vertices. This results in the use of $(m+n)-(p_1+p_2)+2$ odd Perrin numbers and $p_1+p_2$ even Perrin numbers, yielding the parameters $\varepsilon_0 = (m+n)-(p_1+p_2)+1$ and $\varepsilon_1 = p_1 + p_2$. Now, consider the case where $m+n+2 = 7p$ for some positive integer $p$; this implies $p_1 + p_2 = 3p$ and $m+n = 7p - 2$ when an even Perrin number is skipped. In this scenario, the imbalance $\vert \varepsilon \vert = \vert p - 1 \vert$ is less than $2$ only if $m+n = 12$ or $5$. Similarly, if we skip an odd Perrin number, then $p_1 + p_2 = 3p + 1$ (and $m+n = 7p - 2$ again), and the imbalance becomes $\vert \varepsilon \vert = \vert p - 3 \vert$, which is less than $2$ only when $m+n = 12$, $19$, or $26$. Considering the other cases ($m+n+2=7p+k$ for $1\le k\le 6$), we obtain the rest of the possible values for $m+n$ within the desired range.
\end{proof}

Finally, let us consider the jellyfish graph  $J_{m_1,m_2}$, defined as follows. The vertices $v_1,v_3,v_2,v_4,v_1$ form a $4-$cycle, with an additional chord between $v_3$ and $v_4$. Let $\{u^i_{j}:1\le j\le m_i\}$ denote pendant vertices, each connected to $v_i$ for $i=1,2$ (see Figure \ref{Jellyfish}). 
\begin{figure}
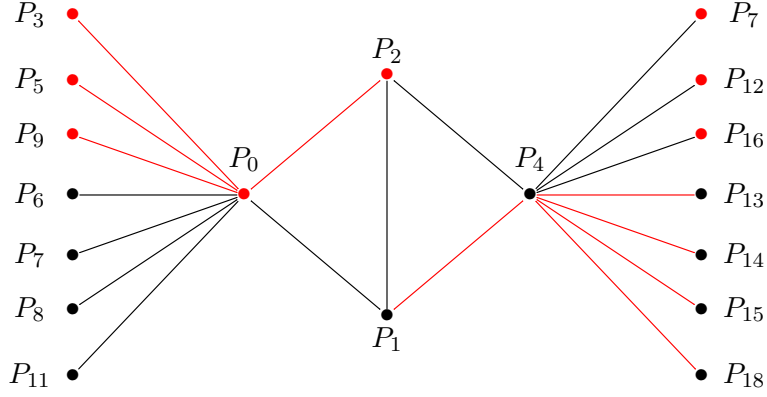

\[
\xygraph{
!{<0cm,0cm>;<1.9 cm,0cm>:<0cm,1.6cm>::}
!{(0,1)}*[red]{\bullet}="1" !{(0,1.2) }*{P_2}
!{(0,-1)}*{\bullet}="2" !{(0,-1.2) }*{P_1}
!{(1,0)}*{\bullet}="3" !{(1,0.3) }*{P_4}
!{(-1,0)}*[red]{\bullet}="4" !{(-1,0.3) }*{P_0}
!{(2.2,0.95)}*[red]{\bullet}="3a" !{(2.5,0.95)}*{P_{12}}
!{(2.2,0.5)}*[red]{\bullet}="3b" !{(2.5,0.5)}*{P_{16}}
!{(2.2,-0.5)}*{\bullet}="3c" !{(2.5,-0.5)}*{P_{14}}
!{(2.2,-0.95)}*{\bullet}="3d" !{(2.5,-0.95)}*{P_{15}}
!{(2.2,-1.5)}*{\bullet}="3e" !{(2.5,-1.5)}*{P_{18}}
!{(2.2,1.5)}*[red]{\bullet}="3f" !{(2.5,1.5)}*{P_{7}}
!{(2.2,0)}*{\bullet}="3g" !{(2.5,0)}*{P_{13}}
!{(-2.2,1.5)}*[red]{\bullet}="4a" !{(-2.5,1.5)}*{P_{3}}
!{(-2.2,-0.95)}*{\bullet}="4b" !{(-2.5,-0.95)}*{P_{8}}
!{(-2.2,-0.5)}*{\bullet}="4c" !{(-2.5,-0.5)}*{P_{7}}
!{(-2.2,-0)}*{\bullet}="4d" !{(-2.5,-0)}*{P_{6}}
!{(-2.2,0.5)}*[red]{\bullet}="4e" !{(-2.5,0.5)}*{P_{9}}
!{(-2.2,0.95)}*[red]{\bullet}="4f" !{(-2.5,0.95)}*{P_{5}}
!{(-2.2,-1.5)}*{\bullet}="4g" !{(-2.5,-1.5)}*{P_{11}}
"1"-"2" "2"-@[red]"3" "2"-"4" "4"-@[red]"1" "1"-"3"
"3"-"3a" "3"-"3b" "3"-@[red]"3c" "3"-@[red]"3d" "3"-@[red]"3e" "3"-"3f" "3"-@[red]"3g"
"4"-@[red]"4a" "4"-"4b" "4"-"4c" "4"-"4d" "4"-@[red]"4e" "4"-@[red]"4f" "4"-"4g"
}
\]
\caption{Perrin cordial labeling for $J_{7,7}$}
\label{Jellyfish}
\end{figure}
\begin{thrm}
All Jellyfish graphs are Perrin cordial.
\end{thrm}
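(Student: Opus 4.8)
The plan is to use, exactly as in the earlier proofs, that only the parity of the Perrin number on each vertex affects the induced edge labels, so it suffices to $2$-colour $V(J_{m_1,m_2})$ by ``even''/``odd'' subject to two requirements: \emph{(i)} the number of ``even'' vertices must equal $k$ or $k-1$, where $k$ is the number of even Perrin numbers among $\{P_0,\dots,P_N\}$ with $N=m_1+m_2+4$ (exactly one Perrin number, even or odd, is dropped from the admissible list, so the even-vertex count is forced to be one of these two values), and \emph{(ii)} the induced edge labelling satisfies $\vert\varepsilon_0-\varepsilon_1\vert\le 1$. Assume throughout that $m_1\ge m_2$.

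First I would tabulate the contributions of the three structural pieces. The five core edges $v_1v_3,\,v_3v_2,\,v_2v_4,\,v_4v_1,\,v_3v_4$ contribute to $\tilde{\varepsilon}=\varepsilon_0-\varepsilon_1$ an odd quantity $C$ that depends only on the parities of $v_1,v_2,v_3,v_4$; a short case check shows that when $v_1$ is even and $v_2$ is odd one gets $C=+1$ if $v_3,v_4$ have the same parity (with one or three of the four core vertices even) and $C=-1$ if $v_3,v_4$ have opposite parity (with exactly two of them even). Next, treating each pendant star as a copy of $K_{1,m_i}$ (see Equation~\eqref{1}): if $v_1$ is even and $a_1$ of its $m_1$ pendants are even, the star at $v_1$ contributes $2a_1-m_1$ to $\tilde{\varepsilon}$; if $v_2$ is odd and $a_2$ of its $m_2$ pendants are even, the star at $v_2$ contributes $m_2-2a_2$. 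With these hub parities,
\[
\tilde{\varepsilon}=C+2(a_1-a_2)+(m_2-m_1),
\]
while the number of even-labelled vertices is $t=c+a_1+a_2$, where $c\in\{1,2,3\}$ counts the even vertices among $v_1,v_2,v_3,v_4$ (always including $v_1$).

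The construction then reduces to solving, for each residue of $N\pmod 7$, the pair of linear conditions $t=c+a_1+a_2$ and $C+2(a_1-a_2)+(m_2-m_1)=\tilde{\varepsilon}$ for admissible $a_1,a_2$, where $t\in\{k-1,k\}$ (this decides whether an even or an odd Perrin number is dropped) and $\tilde{\varepsilon}\in\{-1,0,1\}$ is chosen so that the system is integral. The available knobs --- the parities of $v_3,v_4$ (hence $C$ and $c$), the choice between $t=k-1$ and $t=k$, and the sign of $\tilde{\varepsilon}$ --- provide enough slack to take $a_1+a_2$ as close as possible to $k$ and $a_1-a_2$ close to $(m_1-m_2)/2$, so that both land in the admissible ranges $0\le a_1\le m_1$ and $0\le a_2\le m_2$; here $m_1\ge m_2$ is used, so that $a_1-a_2\ge 0$ and hence $a_2\ge 0$. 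One finally reads $\varepsilon_0$ and $\varepsilon_1$ off the three edge groups and checks $\vert\varepsilon_0-\varepsilon_1\vert\le 1$; the labelling for $J_{7,7}$ in Figure~\ref{Jellyfish} (hubs of opposite parity, $C=-1$, $c=2$, and $t=k-1=8$ since $k=9$) is exactly the $N\equiv 4\pmod 7$ instance of this scheme.

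The main obstacle is the simultaneous fit of \emph{(i)} and \emph{(ii)}: the even-vertex count is pinned to one of only two values, roughly $3N/7$, while the two stars' imbalances must still cancel. This is precisely why the hubs are given \emph{opposite} parities --- with equal hub parities the combined star contribution is forced to be about $\pm(N-2k)\approx\pm N/7$, which the five core edges cannot absorb once $N$ is large. The remaining verification --- checking that the prescribed $a_1-a_2\approx(m_1-m_2)/2$ is compatible with $a_2\ge 0$ (the one genuinely delicate point), then carrying out the arithmetic in each of the seven residue classes modulo $7$ using the table at the start of the section, and handling the finitely many small cases --- follows the same template as the path, wheel, triangular-snake and friendship arguments above.
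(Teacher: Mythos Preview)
Your proposal is correct and follows essentially the same route as the paper: fix the two hubs $v_1,v_2$ to opposite parities, write $\tilde\varepsilon$ as a linear function of the even-pendant counts (the paper's $k_1,k_2$ are your $a_1,a_2$), and then use the parities of $v_3,v_4$ together with the choice of which Perrin number to drop to hit $|\tilde\varepsilon|\le 1$; the paper carries this out only for $m_1\equiv m_2\equiv 0\pmod 7$ and defers the rest exactly as you do. Your write-up is in fact a bit more explicit than the paper's --- you isolate the even-vertex-count constraint $t\in\{k-1,k\}$ and flag the $a_2\ge 0$ feasibility check, which the paper leaves implicit.
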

\begin{proof}
To prove that the jellyfish graph is Perrin cordial, we begin by specifying a labeling strategy for the vertices. For each \( i = 1, 2 \), let the pendant vertices \( \{u^i_1, u^i_2, \ldots, u^i_{k_i}\} \) be assigned even Perrin numbers, and the remaining pendant vertices \( \{u^i_{k_i+1}, u^i_{k_i+2}, \ldots, u^i_{m_i}\} \) be assigned odd Perrin numbers.

We first consider the case when both \( m_1 \) and \( m_2 \) satisfy \( m_1, m_2 \equiv 0 \pmod{7} \). Let \( m_i = 7p_i \) for integers \( p_1, p_2 \). Then, the total number of available even Perrin numbers is \( 3(p_1 + p_2) + 3 \), and the number of odd Perrin numbers is \( 4(p_1 + p_2) + 2 \). Without loss of generality, we skip one even Perrin number in our labeling to maintain balance.

Now consider the labeling of the internal vertices \( v_1, v_2, v_3, \) and \( v_4 \). Suppose both \( v_1 \) and \( v_3 \) are labeled with even Perrin numbers, and \( v_2 \) and \( v_4 \) with odd Perrin numbers. Then, the imbalance parameter is calculated as follows:
\begin{eqnarray*}
    \varepsilon & = & k_1 + (m_2 - k_2 + 2) - \left[(m_1 - k_1) + k_2 + 3\right] \\
                & = & (m_2 - m_1) + 2(k_1 - k_2) - 1 \\
                & = & 7p_2 - 7p_1 + 2(3(p_1 + p_2) - k_2 - k_2) - 1 \\
                & = & 13p_2 - p_1 - 4k_2 - 1.
\end{eqnarray*}

Alternatively, if only \( v_1 \) is labeled with an even Perrin number and \( v_2, v_3, v_4 \) are labeled with odd Perrin numbers, a similar calculation yields:
\[
\varepsilon = 13p_2 - p_1 - 4k_2 + 3.
\]

We adopt the first labeling scheme (both \( v_1 \) and \( v_3 \) labeled with even Perrin numbers) when \( p_2 - p_1 \equiv 0, 1, 2 \pmod{4} \), and the second scheme when \( p_2 - p_1 \equiv 3 \pmod{4} \). This guarantees that \( |\varepsilon| \leq 1 \) in all cases, ensuring that the labeling is Perrin cordial.

For all other cases (i.e., when either \( m_1 \not\equiv 0 \pmod{7} \) or \( m_2 \not\equiv 0 \pmod{7} \)), the proof follows similarly by adjusting the counts of available Perrin numbers accordingly. Thus, we conclude that the jellyfish graph is Perrin cordial in all cases.
\end{proof}

\section{Conclusion}

In this paper, we introduced and developed the concept of \textit{Perrin cordial labeling}, a new graph labeling scheme grounded in the Perrin number sequence. To the best of our knowledge, this is the first attempt to define the Perrin cordial labeling in the area of graph theory. By defining and applying this labeling to various standard graph families—including paths, cycles, complete and complete bipartite graphs, star graphs, wheel graphs, and jellyfish graphs—we have laid the foundational groundwork for this novel intersection between number theory and graph labeling. Our results show that several classical graph families admit Perrin cordial labelings under the defined criteria, highlighting both the versatility and constraints of this scheme. The Perrin cordial labeling presents rich potential for further exploration, especially for complex or compound graph structures. Future research directions may include the characterization of additional graph families—such as circulant graphs, corona products, and other composite structures—with respect to Perrin cordial labeling. Moreover, developing efficient algorithms for constructing such labelings remains an important open problem. Another promising avenue lies in exploring deeper theoretical connections between Perrin cordial labeling and other well-known numerical sequences, such as the Fibonacci and Tribonacci sequences, to uncover broader structural or algebraic relationships within graph labeling theory.

\bibliographystyle{amsplain}

\end{document}